\DeclareMathAlphabet{\pazocal}{OMS}{zplm}{m}{n}
\DeclareFontFamily{OT1}{pzc}{}
\DeclareFontShape{OT1}{pzc}{m}{it}{<-> s * [1.10] pzcmi7t}{}
\DeclareMathAlphabet{\mathpzc}{OT1}{pzc}{m}{it}
\DeclareMathOperator{\diam}{diam}
\theoremstyle{plain}
\newtheorem{thm}{Theorem}[section]
\newtheorem{lem}[thm]{Lemma}
\theoremstyle{definition}
\theoremstyle{remark}
\newtheorem{remark}{Remark}
\newcommand{\C}{\mathbb{C}}
\newcommand{\R}{\mathbb{R}}
\newcommand{\N}{\mathbb{N}}
\newcommand{\red}[1]{{\color{red}{#1}}} 
\newcommand{\blue}[1]{{\color{blue}{#1}}} 
\newcommand{\orange}[1]{{\color{orange}{#1}}} 
\newcommand{\im}{\textup{i}} 
\newcommand{\E}{K} 
\newcommand{\taun}{\mathcal T_n} 
\newcommand{\e}{e} 
\newcommand{\PiEpqtilde}{\Pi_{\p^\E,\ptilde^\E}^{(2),\E}} 
\newcommand{\PiEp}{\Pi_{\p^\E}^{(1),\E}} 
\newcommand{\PiE}{\Pi^{\E}} 
\newcommand{\Pizepqtilde}{\Pi_{\pe}^{0,\e}} 
\newcommand{\SE}{S^\E} 
\newcommand{\un}{u_h} 
\newcommand{\vn}{v_h} 
\newcommand{\uh}{u_h} 
\newcommand{\vh}{v_h} 
\newcommand{\w}{w} 
\renewcommand{\k}{k} 
\newcommand{\n}{\mathbf n} 
\renewcommand{\a}{a} 
\renewcommand{\b}{b} 
\newcommand{\aE}{\a^\E} 
\newcommand{\ahE}{\aE_\h} 
\newcommand{\ch}{c_\h^{\partial \Omega}} 
\newcommand{\bh}{b_\h} 
\newcommand{\Fh}{F_\h} 
\newcommand{\PW}{\mathbb {PW}} 
\newcommand{\EW}{\mathbb {EW}} 
\newcommand{\Vh}{V_{\h}} 
\newcommand{\VhE}{V_\h(\E)} 
\newcommand{\wjEVE}{w^{\text{EV},\E}_j} 
\newcommand{\wlehat}{\widehat{w}_\ell^e} 
\newcommand{\pehat}{\widehat{p}_\e} 
\renewcommand{\d}{\textbf{\textup{d}}} 
\newcommand{\h}{h} 
\newcommand{\hE}{\h_\E} 
\newcommand{\he}{\h_e} 
\newcommand{\p}{p} 
\newcommand{\q}{q} 
\newcommand{\qE}{\q_\E} 
\newcommand{\ptilde}{\widetilde \p} 
\newcommand{\qtilde}{\widetilde \q} 
\newcommand{\pe}{p_e} 
\newcommand{\pE}{p_\E} 
\newcommand{\g}{g} 
\newcommand{\En}{\mathcal E_n} 
\newcommand{\Enb}{\mathcal E_n^B} 
\newcommand{\EnI}{\mathcal E_n^I} 
\newcommand{\EnIone}{\mathcal E_n^{1,I}} 
\newcommand{\EnItwo}{\mathcal E_n^{2,I}} 
\newcommand{\EnBone}{\mathcal E_n^{1,B}} 
\newcommand{\EnBtwo}{\mathcal E_n^{2,B}} 
\newcommand{\EnGamma}{\mathcal E_n^{\Gamma}} 
\newcommand{\EE}{\mathcal E^{\E}} 
\newcommand{\x}{\textbf{\textup{x}}} 
\newcommand{\xE}{\textbf{\textup{x}}_\E} 
\newcommand{\dl}{\textbf{\textup{d}}_\ell} 
\newcommand{\elltilde}{\widetilde \ell} 
\newcommand{\card}{\textup{card}} 
\newcommand{\ds}{\text{d}s}
\newcommand{\dx}{\text{d}x}
\newcommand{\xbold}{\mathbf x} 
\newcommand{\kcal}{\mathfrak k} 
\newcommand{\thetacrit}{\theta_{\text{crit}}} 
\newcommand{\thetainc}{\theta_{\text{inc}}} 
\newcommand{\thetaT}{\theta_{\text{T}}} 
\newcommand{\thetaR}{\theta_{\text{R}}} 
\renewcommand{\span}{\text{span}} 
\newcommand{\thetaEW}{\theta^{\text{EW}}} 
\newcommand{\dof}{\textup{dof}} 
\newcommand{\nE}{\textbf{\textup{n}}_\E} 
\newcommand{\Ne}{n_\E} 
\newcommand{\we}{w^\e} 
\newcommand{\dir}{\textbf{\textup{d}}} 
\newcommand{\AboldEone}{\mathbf {A}^{(1),\E}}
\newcommand{\SboldEone}{\mathbf {S}^{(1),\E}}
\newcommand{\GboldEone}{\mathbf {G}^{(1),\E}}
\newcommand{\PiboldEsone}{\mathbf {\Pi}^{(1),\E}_*}
\newcommand{\PiboldEone}{\mathbf {\Pi}^{(1),\E}}
\newcommand{\DboldEone}{\mathbf {D}^{(1),\E}}
\newcommand{\BboldEone}{\mathbf {B}^{(1),\E}}
\newcommand{\IboldEone}{\mathbf {I}^{(1),\E}}
\newcommand{\AboldEtwo}{\mathbf {A}^{(2),\E}}
\newcommand{\GboldEtwo}{\mathbf {G}^{(2),\E}}
\newcommand{\GboldEtwohat}{\mathbf {G}^{(2),\E}}
\newcommand{\DboldEtwo}{\mathbf {D}^{(2),\E}}
\newcommand{\BboldEtwo}{\mathbf {B}^{(2),\E}}
\newcommand{\Rbolde}{\mathbf {R}^\e}
\newcommand{\Gbolde}{\mathbf {G}^\e_0}
\newcommand{\Bbolde}{\mathbf {B}^\e_0}
\newcommand{\pbold}{\mathbf \p}
\newcommand{\pboldo}{\pbold^{(1)}}
\newcommand{\pboldt}{\pbold^{(2)}}
\newcommand{\pboldtilde}{\widetilde \pbold}
\newcommand{\pboldepsilon}{\pbold_{\mathcal E_n}}
\newcommand{\phE}{p^{\E}}
\newcommand{\phEhat}{p^{\E}}
\newcommand{\phatE}{\widehat{p}_{\E}}
\newcommand{\ptildeE}{\ptilde^{\E}}
\newcommand{\PWtilde}{\widetilde{\PW}_{p_e}} 
\newcommand{\PWtildepqtilde}{\widetilde{\PW}^{(2)}_{\p^{\E},\ptilde^{\E}}} 
\newcommand{\PWtildepqtildeminus}{\widetilde{\PW}^{(2)}_{\p^{\E^-},\ptilde^{\E^-}}} 
\newcommand{\PWtildepqtildeplus}{\widetilde{\PW}^{(2)}_{\p^{\E^+},\ptilde^{\E^+}}} 
\newcommand{\uinc}{u_{\textup{inc}}}
\author{
\normalsize{
}}
\date{}
\title{\Large{Extension of the nonconforming Trefftz virtual element method to the Helmholtz problem with piecewise constant wave number}}
\date{}
\author{Lorenzo Mascotto\thanks{Faculty of Mathematics, University of Vienna, 1090 Vienna, Austria (lorenzo.mascotto@univie.ac.at, alex.pichler@univie.ac.at)},\ Alexander Pichler\footnotemark[1]}
\begin{document}
\maketitle

\begin{abstract}
\noindent
We extend the nonconforming Trefftz virtual element method introduced in~\cite{ncTVEM_Helmholtz} to the case of the fluid-fluid interface problem, that is, a Helmholtz problem with piecewise constant wave number.
With respect to the original approach, we address two additional issues:
firstly, we define the coupling of local approximation spaces with piecewise constant wave numbers;
secondly, we enrich such local spaces  with special functions capturing the physical behaviour of the solution to the target problem.
As these two issues are directly related to an increase of the number of degrees of freedom, we use a reduction strategy inspired by~\cite{TVEM_Helmholtz_num},
which allows to mitigate the growth of the dimension of the approximation space when considering~$\h$- and $\p$-refinements.
This renders the new method highly competitive in comparison to other Trefftz and quasi-Trefftz technologies tailored for the Helmholtz problem with piecewise constant wave number.
A wide range of numerical experiments, including the~$\p$-version with quasi-uniform meshes and the $\h\p$-version with isotropic and anisotropic mesh refinements, is presented.

\medskip\noindent
\textbf{AMS subject classification}: 35J05, 65N12, 65N30, 74J20

\medskip\noindent
\textbf{Keywords}: nonconforming virtual element methods, Trefftz methods, Helmholtz problem, piecewise constant wave number, plane and evanescent waves, polygonal meshes
\end{abstract}

\section{Introduction} \label{section introduction}
Efficient methods for the approximation of solutions to high frequency wave propagation problems have received an increasing attention over the last two decades.
Starting from the ultra weak variational formulation of Cessenat and Despr\'es~\cite{cessenatdespres_basic}, many wave based methods for the Helmholtz problem have been introduced and analyzed, see~\cite{PWDE_survey} for an overview of the topic.
Such methods are in general based on trial and test spaces consisting of piecewise (discontinuous) plane waves.

In the framework of the virtual element method (VEM)~\cite{VEMvolley, hitchhikersguideVEM}, which can be seen an extension of the finite element method (FEM) to polytopal meshes and as the ultimate evolution of the mimetic finite differences~\cite{BLM_MFD, lipnikov2014mimetic},
an $H^1$-conforming method for the Helmholtz problem was introduced in~\cite{Helmholtz-VEM}.
Such a method, known as the plane wave VEM, is based on local approximation spaces containing plane waves that are eventually patched continuously with the aid of a partition of unity,
in the spirit of the pioneering work of Melenk and Babu\v ska~\cite{BabuskaMelenk_PUMintro}.

More recently, a novel nonconforming Trefftz-VEM for the Helmholtz problem was developed in~\cite{ncTVEM_Helmholtz} as an extension of the harmonic VEM~\cite{conformingHarmonicVEM, ncHVEM} for the Laplace problem.
The two main features of this method are $(i)$ that it is \emph{Trefftz} (i.e., local spaces consist of functions belonging to the kernel of the target differential operator)
and $(ii)$ that it falls within the nonconforming virtual element framework, see e.g.~\cite{nonconformingVEMbasic, VEM_fullync_biharmonic, cangianimanzinisutton_VEMconformingandnonconforming, gardini2018nonconforming}.
Although in the basic construction of the method more degrees of freedom than e.g. in the plane wave discontinuous Galerkin method~\cite{GHP_PWDGFEM_hversion} are needed,
a modification of a strategy introduced in~\cite{TVEM_Helmholtz_num} allows to significantly reduce the dimension of the approximation space as well as the condition number of the resulting final system;
this renders the nonconforming Trefftz-VEM approach highly competitive in comparison with other Trefftz technologies.
Roughly speaking, the main idea of this strategy is that, whenever two basis functions are generating ``almost'' the same space, one of the two can be kicked out from the set of basis functions,
yet not jeopardizing the approximation properties of the space.
\medskip

The methods described so far have been tailored for the simplest Helmholtz problem, that is, for problems with constant wave number; the case of variable wave number is more challenging and intriguing.
The instance of analytic wave number was faced in a number of works, for instance by Imbert-G\'erard and collaborators in~\cite{DespresImbert2014, ImbertGerard2015, imbert2015well, imbertMonk2017}, where the so-called \emph{generalized plane waves} were introduced;
the idea behind that approach is to employ approximation spaces that are globally discontinuous and locally spanned by combinations of exponential functions applied to complex polynomials.
It is worthwhile to notice that this method is \emph{quasi-Trefftz} only (that is, when applying the Helmholtz operator to the basis functions, one gets a quantity which is converging to zero as the mesh size decreases and the dimension of the local space increases)
and that it generalizes the discontinuous enrichment method~\cite{tezaur2014airy}, which addresses the simpler case of linear wave number.
Another quasi-Trefftz method for smooth wave numbers is provided in a work of Betcke and Phillips in~\cite{betcke2011adaptive}; there, the basis functions are modulated plane waves, i.e., products of plane waves with polynomials.

On the other hand, the instance of piecewise constant wave numbers gives raise to the fluid-fluid interface problem, which models the transmission of a wave between two fluids with different refraction indices; such model is in fact the one tackled in the present paper.
We mention that the plane wave discontinuous Galerkin method and the discontinuous enrichment method have been successfully applied to this problem, see~\cite{luostari2013improvements} and~\cite{tezaur2008discontinuous}, respectively.
In those two approaches, Bessel functions were employed in addition to plane waves, and other special functions (namely evanescent waves) were added to capture the physical behaviour of the solution at the interface between the two fluids.

\medskip
In this paper,
\begin{enumerate}
\item we extend the nonconforming Trefftz-VEM of~\cite{ncTVEM_Helmholtz, TVEM_Helmholtz_num} to the case of piecewise constant wave numbers, and
\item following what was done in~\cite{luostari2013improvements, tezaur2008discontinuous}, we also include proper special functions in the approximation spaces to capture the behaviour of the physical solution.
\end{enumerate}
We will see that both issues elegantly fit within the nonconforming Trefftz-VEM framework.
Further, by employing the removing technique introduced in~\cite{TVEM_Helmholtz_num}, an extremely robust numerical performance is obtained.

The method we are going to present is characterized by local spaces containing plane (and possibly evanescent) waves, plus additional functions implicitly defined as solutions to local Helmholtz problems
with impedance boundary conditions in proper 1D plane and evanescent wave spaces.
These local spaces are eventually coupled in a nonconforming fashion \emph{\`a la} Crouzeix-Raviart
(in the sense that the jumps across the interface between elements have zero moments up to a certain order).
The fact that the functions in the approximation space are unknown in closed form entails that, in order to implement the method, one can not use the continuous sesquilinear form;
rather, discrete counterparts based on projections onto (plane and evanescent) wave spaces and stabilizing sesquilinear forms are employed.

The outline of the paper is as follows.
Section~\ref{section fluid-fluid interface problem} is devoted to the description of the model problem, whereas
Section~\ref{section functional spaces} provides the notation for plane wave and evanescent wave spaces, as well as for nonconforming Sobolev spaces.
The method, including the definition of the local and the global spaces, of a set of degrees of freedom, of suitable projections onto wave spaces, and of suitable stabilizations, is the topic of Section~\ref{section ncTVEM fluidfluid}.
In Section~\ref{section numerical results}, we briefly discuss the implementation details of the method and we present a number of numerical experiments.
In particular, we study the performance of the $\h$- and of $\p$-versions, whenever the meshes are conforming with respect to the interface between the two fluids (i.e., the wave number is piecewise constant over the polygonal decomposition);
the rate of convergence is algebraic and exponential in terms of~$\h$ and~$\p$ in the former and in the latter case, respectively.
Another interesting set of experiments is focused on testing the robustness of the method, whenever some elements are cut by the interface;
on such elements, in fact, the solution to the fluid-fluid problem has typically a very low Sobolev regularity, and therefore the convergence of the $\h$- and of $\p$-versions is poor.
Consequently, the $\h\p$-version with geometric isotropic and anisotropic mesh refinements is employed, leading to algebraic and exponential convergence
in terms of proper roots of the number of degrees of freedom in the former and in the latter case, respectively.
Some conclusions are stated in Section~\ref{section conclusions}.
It is important to highlight that in the implementation of the method, quadrature formulas are needed only for the approximation of the terms involving the boundary data.

We stress that, although the present paper is aimed at the approximation of the Helmholtz problem with piecewise constant wave number solely, the setting of the nonconforming Trefftz-VEM can be applied in other situations.
For instance, one could extend the method to the case of analytic wave number, dovetailing the nonconforming VEM technology with the tools stemming from the theory of generalized plane waves.
A possible advantage of employing a variant of the approach presented herein, in lieu of the  discontinuous Galerkin one~\cite{imbertMonk2017}, is that the or\-tho\-go\-na\-li\-za\-tion-and-fil\-te\-ring technique inspired by~\cite{TVEM_Helmholtz_num}
could lead to an improved convergence rate in terms of the number of degrees of freedom and to an improved conditioning of the final system.

As a final comment, we stress that another appealing feature of the nonconforming setting is that the extension to the 3D case
is much more straightforward than in the $H^1$-conforming setting; see \cite[Section 3.7]{ncHVEM} for a description of such an extension in the case where the target differential operator is the Laplacian.


\paragraph*{Notation.}
Throughout the paper, we will employ the standard notation for Sobolev spaces, norms, seminorms and inner products, see e.g.\cite{adamsfournier}.
More precisely, given a domain~$D\subset \mathbb R^2$, we denote  by~$H^s(D)$ the Sobolev space of functions with square integrable weak derivatives up to order~$s$, for some nonnegative integer~$s$, over~$D$,
and the corresponding seminorms and norms by~$|\cdot|_{s,D}$ and~$\lVert \cdot \rVert_{s,D}$, respectively.
Sobolev spaces of noninteger order can be defined by interpolation theory.
If the domain~$D$ is also bounded,~$H^{1/2}(\partial D)$ denotes the space of the traces of~$H^1(D)$ functions and~$H^{-1/2}(\partial D)$ denotes its dual space. Further, $(\cdot,\cdot)_{0,D}$ is the usual~$L^2$ inner
product over~$D$. Lastly, we denote by~$\N_0$ the set of all natural numbers including~$0$, and by~$\N_{\ge r}$, for some~$r>0$, the set of all natural numbers larger than or equal to~$r$.

\section{The fluid-fluid interface problem} \label{section fluid-fluid interface problem}
Given a polygonal domain~$\Omega \subset \mathbb R^2$, a piecewise (real-valued) constant wave number~$\kcal\in L^{\infty}(\Omega)$, and~$\g \in H^{-\frac{1}{2}}(\partial \Omega)$, we aim to approximate the solution to the problem
\begin{equation} \label{strong monolithic formulation}
\left\{
\begin{alignedat}{2}
-\Delta u - \kcal ^2 u &= 0 && \quad \text{in } \Omega\\
\nabla u \cdot \n_{\Omega} + \im \kcal u &=\g && \quad \text{on } \partial \Omega,\\
\end{alignedat}
\right.
\end{equation}
where~$\n_{\Omega}$ denotes the unit normal vector on~$\partial \Omega$ pointing outside~$\Omega$ and~$\im$ is the imaginary unit.

The corresponding weak formulation to problem~\eqref{strong monolithic formulation} reads
\begin{equation} \label{weak monolithic formulation}
\begin{cases}
\text{find } u \in V:=H^1(\Omega) \, \text{such that}\\
b(u,v) = \langle \g,v \rangle \quad \forall v \in V,
\end{cases}
\end{equation}
where the sesquilinear form~$b(\cdot,\cdot)$ is given by
\begin{equation} \label{complete form}
\b(u,v) := a(u,v) + \im (\kcal  u,v)_{0,\partial \Omega}
\end{equation}
with
\begin{equation}
a(u,v) := \int_\Omega \nabla u \cdot \overline{\nabla v } \, \dx- \int_\Omega \kcal u \overline v \, \dx,
\end{equation}
and the right-hand side is defined as
\begin{equation} \label{rhs}
\langle \g,v \rangle :=\int_{\partial \Omega} g \overline{v} \, \ds.
\end{equation}
The well-posedness of the problem~\eqref{weak monolithic formulation} can be proven as in e.g.~\cite[Theorem 2.4]{graham2018stability}.

For the sake of simplicity, we will assume in the following that the domain~$\Omega=(-1,1)^2$ is split into two parts~$\Omega_1:=(-1,1) \times (-1,0)$ and~$\Omega_2:=(-1,1) \times (0,1)$,
and that the wave number~$\kcal$ is piecewise constant over~$\Omega_1$ and~$\Omega_2$; more precisely, we set~$k_i:=\kcal _{|{\Omega_i}} = n_i \k$, $i=1,2$, where $k \in \R$, and~$n_1$, $n_2 \in \R$ with~$n_1>n_2$ are the so-called \textit{refraction indices}, respectively.
The more general situation with multiple refraction indices and subdomains is a straightforward modification of the case with two subdomains.

Denoting by~$\Gamma:=\partial \Omega_1\cap \partial \Omega_2$ the interface between the two subdomains with fixed unit normal vector~$\n_{\Gamma}$, problem~\eqref{strong monolithic formulation} can be reformulated as the transmission problem
\begin{equation} \label{strong transmission problem}
\left\{
\begin{alignedat}{2}
\text{find $u_i\in H^1(\Omega_i)$, $i=1,2$, such that} \hspace{-3cm} &&\\
-\Delta u_i - k_i^2 u_i &= 0 && \quad \text{in } \Omega_i,\quad i=1,2\\
\nabla u_i \cdot \n_{\Omega_i} + \im k_i u_i &= g &&\quad  \text{on }\partial \Omega_i \setminus \Gamma,\quad i=1,2\\
u_1 &= u_2 && \quad \text{on } \Gamma\\
\nabla u_1 \cdot \n_{\Gamma} &= \nabla u_2 \cdot \n_{\Gamma} && \quad\text{on } \Gamma.\\
\end{alignedat}
\right.
\end{equation}
This model goes under the name of \textit{fluid-fluid interface problem}.
From a physical standpoint, it describes the propagation of waves through a domain split into two subdomains containing different fluids (e.g. water-air).
Typically, some reflection/transmission phenomenon occurs at the interface~$\Gamma$.
For instance, assuming that there is an incoming traveling plane wave in~$\Omega_1$ with incident angle~$\thetainc$ formed by the  direction of the incoming wave with the interface~$\Gamma$, the model describes the propagation of such wave from~$\Omega_1$ to~$\Omega_2$.
Depending on the angle~$\thetainc$, a different behaviour may occur in~$\Omega_1$ and~$\Omega_2$.

In order to describe the two possible outcomes, we introduce the so-called \textit{critical angle}
\begin{equation} \label{critical angle}
\thetacrit := \cos ^{-1} \left(\frac{n_2}{n_1}\right).
\end{equation}
If~$\thetainc \ge \thetacrit$, the incoming wave is partially refracted at~$\Gamma$ with angle~$\theta_R$ (having the same measure as~$\thetainc$) and transmitted in the subdomain~$\Omega_2$ with transmission angle~$\thetaT$,
which is computed by means of Snell's law
\begin{equation*}
n_1 \cos(\thetainc)=n_2 \cos(\thetaT).
\end{equation*}
Otherwise, if~$\thetainc < \thetacrit$, the incoming wave is totally refracted (with angle~$\thetaR$, having again the same measure as~$\thetainc$); however, in the subdomain~$\Omega_2$ some evanescent modes,
exponentially decaying in terms of the distance from the interface~$\Gamma$, appear.
This phenomenon is known in the literature as \textit{total internal reflection}. 
In Figure~\ref{figure physical model}, the two different situations depending on the choice of~$\thetainc$ are depicted.

\begin{center}
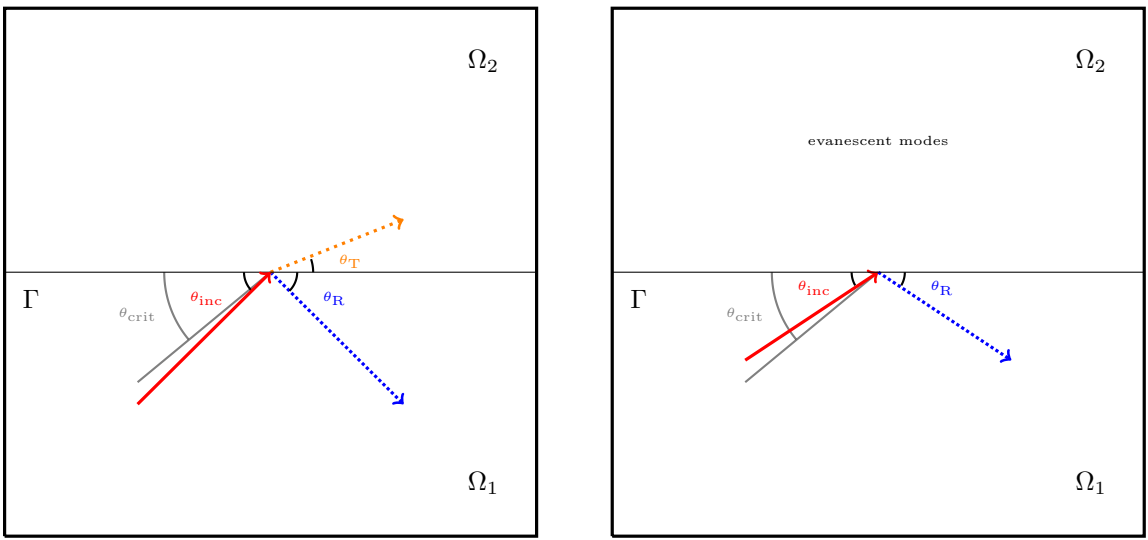
\begin{figure}[H]
\begin{minipage}{0.30\textwidth}
\begin{center}
\begin{tikzpicture}[scale=7]
\draw[blue,densely dotted,very thick,->]  (0.5,0.5) -- (0.75,0.25); \draw[orange, dotted, very thick,->]  (0.5,0.5) -- (0.75,0.6);
\draw [black,thick,domain=180:225] plot ({0.5+0.05*cos(\x)}, {0.5+0.05*sin(\x)}); \draw(0.38, 0.45) node[red] {\tiny{$\thetainc$}};
\draw [black,thick,domain=315:360] plot ({0.5+0.05*cos(\x)}, {0.5+0.05*sin(\x)}); \draw(0.62, 0.45) node[blue] {\tiny{$\thetaR$}};
\draw [black,thick,domain=0:18] plot ({0.5+0.08*cos(\x)}, {0.5+0.08*sin(\x)}); \draw(0.65, 0.52) node[orange] {\tiny{$\thetaT$}};
\draw[black, thick,-, opacity=0.5]  (0.25,7/6*0.25) -- (0.5,0.5); 
\draw[black, very thick, -] (0,0) -- (1,0) -- (1,1) -- (0, 1) -- (0,0);
\draw[black,  -] (0,0.5) -- (1,0.5);
 \draw(0.05, 0.45) node[black] {$\Gamma$};
\draw(0.9, 0.9) node[black] {$\Omega_2$}; \draw(0.9, 0.1) node[black] {$\Omega_1$};
\draw[red,  very thick,->]  (0.25,0.25) -- (0.5,0.5);
\draw [black,thick,domain=180:219.5, opacity=0.5] plot ({0.5+0.2*cos(\x)}, {0.5+0.2*sin(\x)}); \draw(0.25, 0.42) node[black, opacity=0.5] {\tiny{$\thetacrit$}};
\end{tikzpicture}
\end{center}
\end{minipage}
\quad\quad\quad\quad\quad\quad\quad\quad\quad\;
\begin{minipage}{0.30\textwidth}
\begin{center}
\begin{tikzpicture}[scale=7]
\draw [black,thick,domain=180:210] plot ({0.5+0.05*cos(\x)}, {0.5+0.05*sin(\x)}); \draw(0.38, 0.47) node[red] {\tiny{$\thetainc$}};
\draw [black,thick,domain=330:360] plot ({0.5+0.05*cos(\x)}, {0.5+0.05*sin(\x)}); \draw(0.62, 0.47) node[blue] {\tiny{$\thetaR$}};
\draw(0.5, 0.75) node[black] {\tiny{\text{evanescent modes}}};
\draw[black, thick,-, opacity=0.5]  (0.25,7/6*0.25) -- (0.5,0.5); 
\draw [black,thick,domain=180:219.5, opacity=0.5] plot ({0.5+0.2*cos(\x)}, {0.5+0.2*sin(\x)}); \draw(0.25, 0.42) node[black, opacity=0.5] {\tiny{$\thetacrit$}};
\draw[black, very thick, -] (0,0) -- (1,0) -- (1,1) -- (0, 1) -- (0,0);
 \draw(0.05, 0.45) node[black] {$\Gamma$};
\draw(0.9, 0.9) node[black] {$\Omega_2$}; \draw(0.9, 0.1) node[black] {$\Omega_1$};
\draw[red,  very thick,->]  (0.25,4/3 * 0.25) -- (0.5,0.5); \draw[blue,densely dotted,very thick,->]  (0.5,0.5) -- (0.75,4/3*0.25);
\draw[black,  -] (0,0.5) -- (1,0.5);
\end{tikzpicture}
\end{center}
\end{minipage}
\caption{\textit{Left:} $\thetainc \ge \thetacrit$. The incoming wave is partially refracted at~$\Gamma$ and partially transmitted in form of a plane wave with direction given by the angle~$\thetaT$ in~$\Omega_2$.
\textit{Right:} $\thetainc < \thetacrit$. The incoming wave is totally refracted; only evanescent modes appear in~$\Omega_2$.
\textit{Legend:} the directions of the incident, the reflected, and the transmitted plane waves are \red{straight red}, \blue{dashed blue}, and \orange{dotted orange}, respectively. The critical angle~$\thetacrit$ is depicted in grey.}
\label{figure physical model}
\end{figure}
\end{center}
A couple of explicit solutions to the problem~\eqref{weak monolithic formulation} in the transmission and the total internal reflection cases are described in Sections~\ref{subsubsection testcase1} and~\ref{subsubsection testcase2}, respectively.

\section{Plane waves, evanescent waves, and nonconforming Sobolev spaces} \label{section functional spaces}
In this section, we first define the spaces of plane waves and evanescent waves over elements and edges, and, subsequently, we construct a class of nonconforming Sobolev spaces.

We will introduce two types of local spaces, namely plane wave based spaces over the elements in~$\Omega_1$ and spaces based on both plane waves and evanescent waves over the elements contained in~$\Omega_2$.
The choice for the latter spaces is inspired by~\cite{tezaur2008discontinuous, luostari2013improvements}, where evanescent waves were added as special functions to the standard plane wave and Bessel spaces, respectively,
to capture the evanescent modes occurring in specific situations described in Section~\ref{section fluid-fluid interface problem}. We anticipate that variants of such spaces are possible and will be discussed in Section~\ref{section numerical results}. 

First, we fix some notation. Given~$\taun^1$ and~$\taun^2$ two decompositions into polygons of~$\Omega_1$ and~$\Omega_2$, respectively,
then $\taun:=\taun^1 \cup \taun^2$ is a decomposition of~$\Omega$ into polygons.
Further, for all~$\E\in \taun$, we denote by~$\xE$ its barycenter, by~$\hE:=\diam(\E)$ its diameter, and by~$\h:=\max_{\E \in \taun} \hE$ the mesh size of~$\taun$.

Moreover, we write~$\EnIone$ and~$\EnBone$ for the sets of interior edges in~$\taun^1$, and boundary edges in~$\taun^1$ not belonging to~$\Gamma$, respectively. Similarly, we introduce the sets~$\EnItwo$ and~$\EnBtwo$ for~$\taun^2$.
The symbol~$\EnGamma$ denotes the set of edges of~$\taun$ on~$\Gamma$. Further, we define $\EnI:=\EnIone \cup \EnItwo$ and $\Enb:=\EnBone \cup \EnBtwo$.
Finally, $\he$ denotes the length of a given edge~$\e \in \En$, with~$\En$ denoting the set of all edges of~$\taun$, and~$\Ne$ is the number of edges of a given polygon~$\E \in \taun$.
\medskip

Having this, we introduce the local plane wave spaces over the elements in~$\taun^1$. To this purpose, given~$\E \in \taun^1$, let $\{\dl^{\E}\}_{\ell=1}^{\p^{\E}}$, $\p^{\E}=2\q^{\E}+1$, $\q^{\E} \in \mathbb N$, be a bunch of equidistributed normalized directions.
Then, denoting by
\begin{equation} \label{plane waves}
\w_\ell^{(1),\E}(\x):=e^{\im  \k_1  \dl^{\E} \cdot (\xbold - \xE)} \quad \forall \ell=1,\dots,\p^{\E},\quad \forall \x \in \E,
\end{equation}
the plane wave traveling along the directions~$\dl^{\E}$ with wave number~$\k_1$, we define the space of plane waves over~$\E$ as
\begin{equation} \label{bulk plane wave space}
\PW^{(1)}_{\p^{\E}}(\E) := \span \left\{ w_\ell^{(1),\E} \mid \ell= 1,\dots, \p^{\E} \right\}.
\end{equation}
Note that we allow here for elementwise different numbers of plane waves;
this notation is particularly suitable for developing the $\h\p$-version of the method, see Section~\ref{paragraph h VS hpISO}.
\medskip

Analogously, for all~$\E \in \taun^2$, we define the bulk plane wave space $\PW^{(2)}_{\p^{\E}}(\E)$ as the span of the plane waves~$\w_\ell^{(2),\E}$, which are defined in the same way as~$\w_\ell^{(1),\E}$ in~\eqref{plane waves}, but with wave number~$\k_2$ instead of~$\k_1$.

Following~\cite{luostari2013improvements, tezaur2008discontinuous}, we introduce a set of $\ptilde^{\E}=2\qtilde^{\E}$, $\qtilde^{\E}\in \mathbb N_{0}$, evanescent waves, for all~$\E \in \taun^2$. To this purpose, we first consider the set of equidistributed angles
\[
\thetaEW_{\elltilde} = \frac{\elltilde}{\qtilde^{\E}+1} \thetacrit \quad \forall \elltilde=1,\dots,\qtilde^{\E},
\]
where we recall that the critical angle~$\thetacrit$ is computed as in~\eqref{critical angle}. Then, the evanescent waves over~$\E$ are defined as
\begin{equation} \label{evanescent waves}
\wjEVE(\x):=e^{\im\k \widehat{\dir}^{\E}_{\frac{j}{2}} \cdot (\xbold - \xE)} \quad \forall j=1,\dots,\qtilde^{\E}, \quad \forall \x \in \E,
\end{equation}
where~$\k$ is the real number with $\k_1=n_1 \k$ and $\k_2=n_2 \k$, and $\widehat{\dir}^{\E}_{\frac{j}{2}} \in \R \times \C$ is given by
\begin{equation} \label{directions evanescent}
\widehat{\dir}^{\E}_{\frac{j}{2}}:=
\begin{cases}
\begin{pmatrix}
-n_1 \cos\left(\thetaEW_{\lceil \frac{j}{2} \rceil}\right),
\im \sqrt{n_1^2 \cos\left(\thetaEW_{\lceil \frac{j}{2} \rceil}\right)^2-n_2^2}
\end{pmatrix}
\quad &\text{if } j \text{ odd}\\
\begin{pmatrix}
n_1 \cos\left(\thetaEW_{\frac{j}{2}}\right),
\im \sqrt{n_1^2 \cos\left(\thetaEW_{\frac{j}{2}}\right)^2-n_2^2}
\end{pmatrix}
\quad &\text{if } j \text{ even}.
\end{cases}
\end{equation}

\begin{remark} \label{remark relaxing equidistribution}
Note that the assumption of having sets of equidistributed directions and angles in the construction of the plane and evanescent wave spaces, respetively, is made for the sake of simplicity and could be relaxed in principle,
without jeopardizing the approximation properties of the space of interest.
\end{remark}

As one can notice from~\eqref{evanescent waves} and~\eqref{directions evanescent}, the structure of an evanescent wave is similar to that of a plane wave; the difference is that the direction vector is complex-valued in the former case, whereas it is real-valued in the latter.
As discussed and numerically proven in~\cite{luostari2013improvements, tezaur2008discontinuous}, the evanescent waves are better suited than plane waves to capture the exponential decay of the evanescent modes appearing in the fluid-fluid interface problem
for specific incident angles~$\thetainc$, and therefore they could be added to the approximation space associated with the domain~$\Omega_2$ to improve the performance of the method.

We point out that the evanescent waves given by \eqref{evanescent waves} satisfy the homogeneous Helmholtz problem in~$\Omega_2$.
In Figure~\ref{fig:ew1_ew2}, we plot the real and imaginary part of the evanescent wave with parameters~$k=5$, $n_1=2$ and~$n_2=1$ (critical angle $\thetacrit=60^\circ$), and~$\xE=(0,0)$.

\begin{figure}[h]
\begin{center}
\begin{minipage}{0.48\textwidth} 
\includegraphics[width=\textwidth]{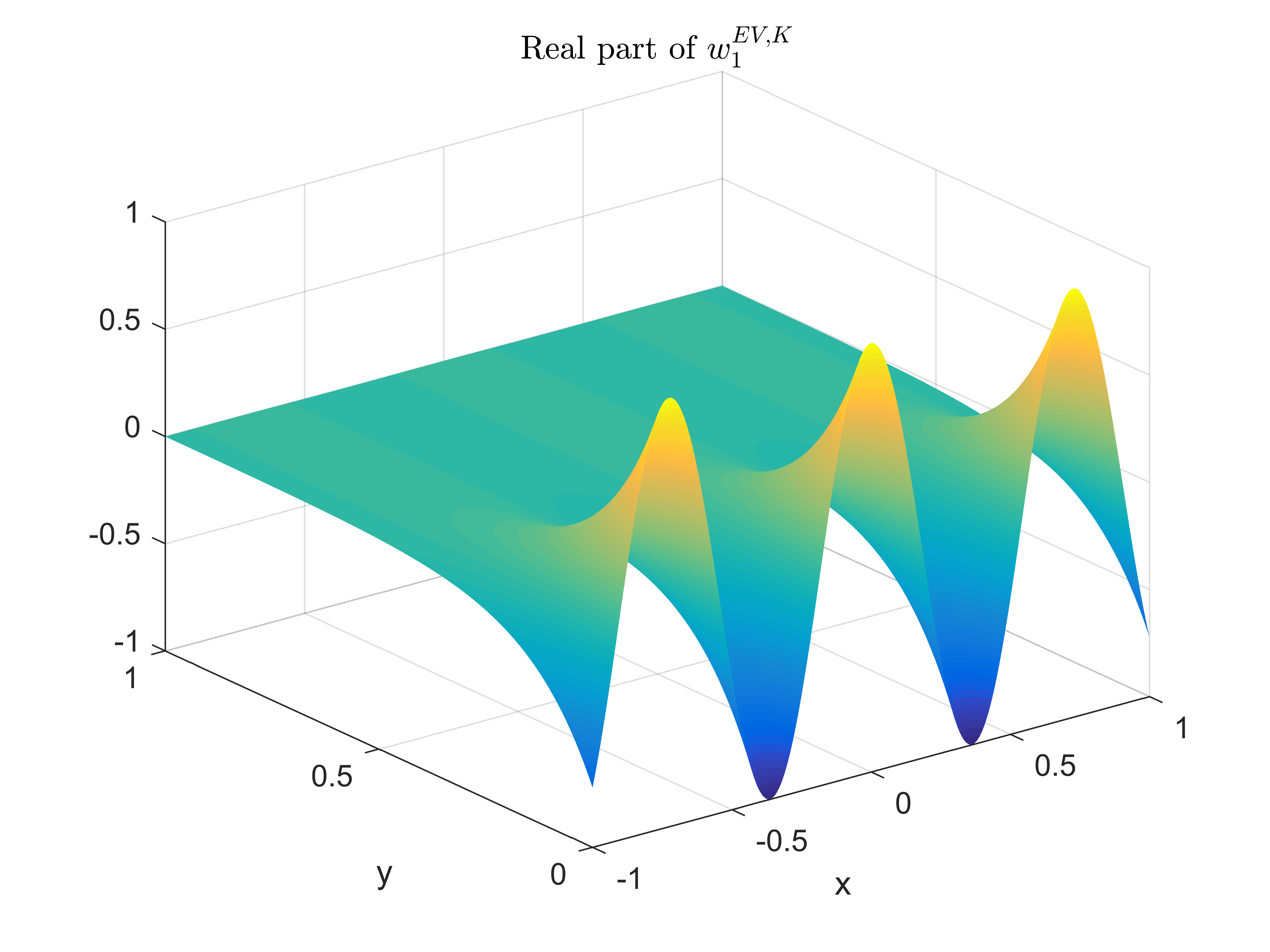}
\end{minipage}
\hfill
\begin{minipage}{0.48\textwidth}
\includegraphics[width=\textwidth]{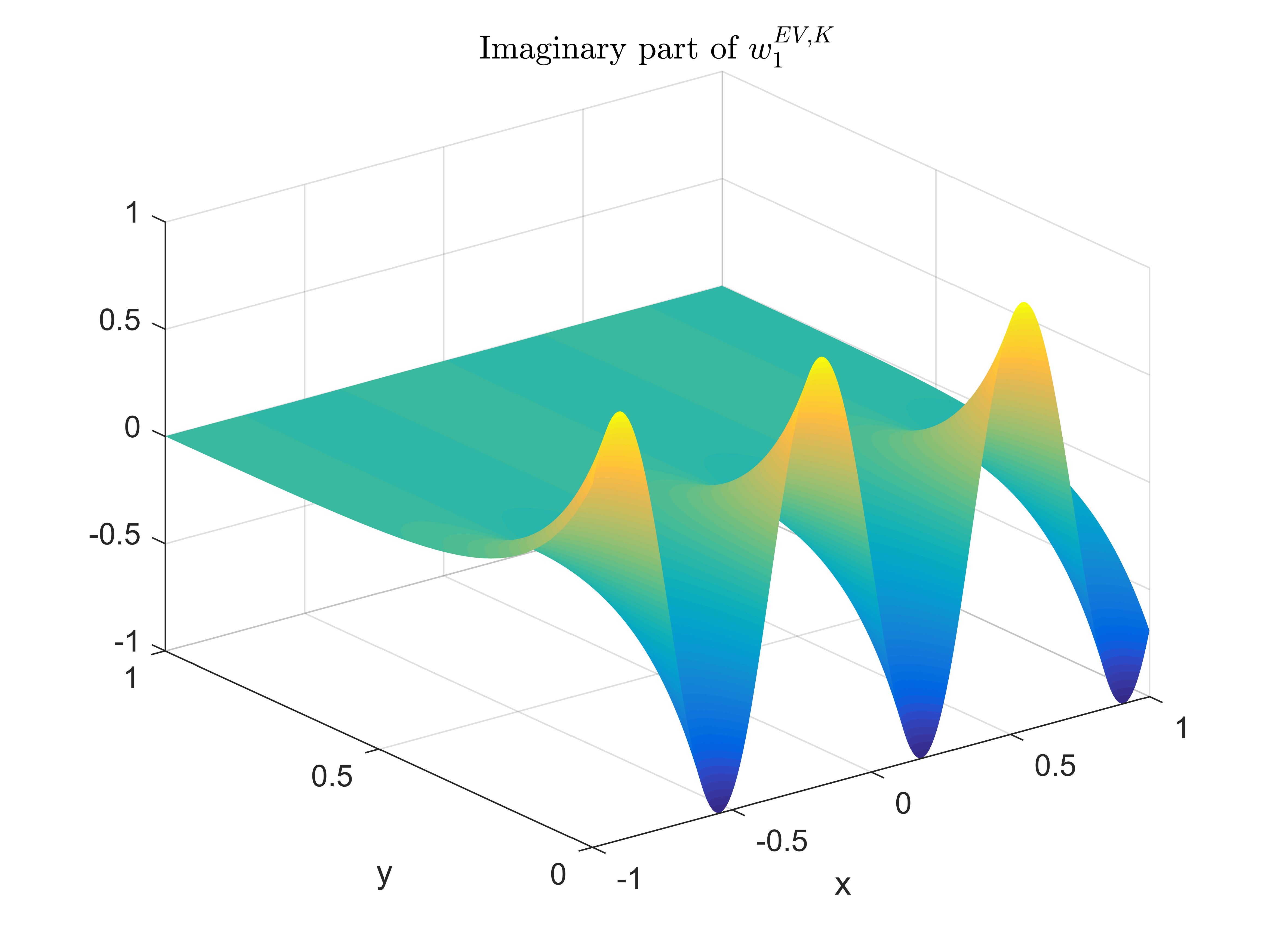}
\end{minipage}
\end{center}
\caption{Real and imaginary parts of the first evanescent wave for~$k=5$, $n_1=2$, $n_2=1$, and~$\xE=(0,0)$.}
\label{fig:ew1_ew2} 
\end{figure}

Finally, we define the space of \emph{evanescent waves} over~$\E\in \taun^2$
\begin{equation*}
\EW_{\ptilde^{\E}}(\E) := \span \left\{ \wjEVE  \mid j=1,\dots, \ptilde^{\E} \right\},
\end{equation*}
and the space of plane waves \emph{and} evanescent waves
\begin{equation} \label{bulk plane and evanescent waves}
\PWtildepqtilde (\E) := \PW^{(2)}_{\p^{\E}}(\E) \cup \EW_{\ptilde^{\E}}(\E).
\end{equation}
In the following, we shall also need spaces of traces of plane waves and evanescent waves over edges. For all edges~$\e \in \En$, we set
\begin{equation} \label{edge plane-evanescent waves}
\PWtilde(\e) := 
\begin{cases}
{\PW^{(1)}_{\p^{\E}}(\E)}_{|\e}, & \text{if } e \in \EnBone \cap \EE \\
{\PW^{(1)}_{\p^{\E^-}}(\E^-)}_{|\e} \cup {\PW^{(1)}_{\p^{\E^+}}(\E^+)}_{|\e} , &
\begin{aligned}
\text{if } & \e \in \EnIone \cap \mathcal{E}^{\E^-} \cap \mathcal{E}^{\E^+} \text{ with } \E^- \neq \E^+ \\ & \E^+,\, \E^- \in \taun^1\\
\end{aligned}\\
{\PWtildepqtilde(\E)}_{|\e} & \text{if } e \in \EnBtwo \cap \EE \\
\PWtildepqtildeminus(\E^-)_{|\e} \cup \PWtildepqtildeplus(\E^+)_{|\e} &
\begin{aligned}
\text{if } 	& \e \in \EnItwo \cap \mathcal{E}^{\E^-} \cap \mathcal{E}^{\E^+} \text{ with } \E^- \neq \E^+ \\
		& \E^+,\, \E^- \in \taun^2\\
\end{aligned}\\
{\PW_{\p^{\E^-}}^{(1)}(\E^-)}_{|\e} \cup \PWtildepqtildeplus(\E^+)_{|\e}, &  
\begin{aligned}
\text{if } &\e \in \EnGamma \cap \mathcal{E}^{\E^-} \cap \mathcal{E}^{\E^+} \text{ with } 
\\ &\quad\quad \E^- \in \taun^1, \E^+ \in \taun^2,
\end{aligned}
\end{cases}
\end{equation}
denoting by~$\pe$ the dimension of the space~$\PWtilde(\e)$.

In words, we consider spaces of traces of plane waves with wave number~$\k_1$ on all edges in $\EnIone \cup \EnBone$, spaces of traces of plane waves with wave number~$\k_2$ and evanescent waves on all edges in $\EnItwo \cup \EnBtwo$,
and, at the interface~$\Gamma$, we consider traces of plane waves with the two different wave numbers~$k_1$ and~$k_2$ and evanescent waves.
The definition~\eqref{edge plane-evanescent waves} will be instrumental to build suitable nonconforming Sobolev spaces.

\begin{remark} \label{remark orthogonalization}
Whilst the dimensions of the bulk plane wave spaces $\PW^{(1)}_{\p^{\E}}(\E)$ and~$\PWtildepqtilde (\E)$ are given by~$\p^{\E}$ and~$\p^{\E}+\ptilde^{\E}$, respectively,
those of the spaces~$\PWtilde(\e)$ are in general smaller than or equal to the sum of the dimensions of the bulk spaces of the adjacent polygons.
In fact, the restriction of two different plane waves onto a given edge could generate a 1D space only.
On the other hand, whenever the restrictions of two plane waves with different directions and wave numbers on a given edge are ``close'', numerical instabilities may occur.
In order to avoid this situation, we will employ the edgewise or\-tho\-go\-na\-li\-za\-tion-and-fil\-te\-ring process introduced in \cite{TVEM_Helmholtz_num}, see Section~\ref{subsection implementational aspects}. 
\end{remark}

Next, we define the broken Sobolev space of order~$s\in \mathbb N$, subordinated to a polygonal decomposition~$\taun$:
\[
H^s(\taun) := \left\{v \in L^2(\Omega) \mid v_{|\E} \in H^s(\E) \; \forall \E \in \taun \right\},
\]
with the seminorms and weighted norms
\[
\vert v \vert^2 _{s,\taun} := \sum_{\E \in \taun} \vert v \vert ^2_{s,\E};
\quad \Vert v \Vert^2_{s,\k; \taun} := \sum_{\E \in \taun} \Vert v \Vert^2_{s,\k,\E} = \sum_{\E \in \taun} \sum_{j=0}^s \k^{2(s-j)} \vert v \vert^2_{j,\E}.
\]
In order to introduce the global nonconforming Sobolev space, we need some additional notation.
Given~$\e \in \EnI$ with adjacent elements~$\E^+$ and~$\E^-$, we set~$\n_{\E^{\pm}}$ the two outer unit normal vectors with respect to~$\partial \E^\pm$.
Further, we define the vector-valued jump of $v \in H^1(\taun)$ across the edge~$\e$ as
\[
\llbracket v \rrbracket_\e := v_{|{\E^+}} \n_{\E^+} + v_{|{\E^-}} \n_{\E^-}.
\]
We will use the notation~$\llbracket v \rrbracket$ instead of$~\llbracket v \rrbracket_\e$ when no confusion occurs.

The global nonconforming Sobolev space with edgewise order of nonconformity~$\pe$ is built as follows.
Given~$N_j$ the cardinality of $\taun^j$, $j=1,2$, we consider the vectors~$\pboldo \in [\mathbb N_{\ge 3}]^{N_1}$, $\pboldt \in [\mathbb N_{\ge 3}]^{N_2}$, and $\pboldtilde \in [\mathbb N_0]^{N_2}$,
representing the distribution of the dimensions of the bulk plane wave spaces over the elements in~$\taun^1$, and of the bulk plane wave spaces and of the evanescent wave spaces over the elements in~$\taun^2$, respectively.
To the set of edges~$\En$, we associate a vector~$\pboldepsilon \in \mathbb N ^{\card(\En)}$, whose $j$-th entry represents the dimension of the space~$\PWtilde(\e)$ defined in~\eqref{edge plane-evanescent waves} on the $j$-th global edge~$\e$.

Eventually, we define the global nonconforming Sobolev space associated with the vector~$\pboldepsilon$:
\begin{equation} \label{nonconforming space}
H_{\pboldepsilon}^{1,nc}(\taun) := \left\{v \in H^1(\taun)  \mid \int_e \llbracket v \rrbracket \cdot \n^\e \overline {\we} \, \ds=0 \quad\forall \we \in \PWtilde(\e),\, \forall \e \in \EnI \right\}.
\end{equation}
We highlight that by using this construction, nonconforming Sobolev spaces can be straightforwardly generalized to the case of piecewise constant~$\kcal$ on more than two subdomains.

\section{A nonconforming Trefftz virtual element method for the fluid-fluid interface problem} \label{section ncTVEM fluidfluid}
In this section, we introduce a nonconforming Trefftz-VEM for the approximation of the fluid-fluid interface problem~\eqref{weak monolithic formulation} based on plane waves and evanescent waves.
Such a method differs from the original one in~\cite{ncTVEM_Helmholtz,TVEM_Helmholtz_num} by the two following features:
\begin{itemize}
\item the wave number is piecewise (and not globally) constant;
\item special functions, i.e., evanescent waves, are locally added to the approximation spaces to capture the physical behaviour of the evanescent modes possibly appearing in~$\Omega_2$ in proximity of the interface~$\Gamma$. 
\end{itemize}
We will see that these two features elegantly fit within the nonconforming VEM setting of~\cite{ncHVEM, ncTVEM_Helmholtz, TVEM_Helmholtz_num}.

The method we design has the following structure:
\begin{equation} \label{VEM}
\begin{cases}
\text{find } \uh \in \Vh \text{ such that}\\
\bh(\uh,\vh) =\Fh(\vh) \quad \forall \vh \in \Vh,
\end{cases}
\end{equation}
where~$\Vh$ is a finite dimensional space, $\bh(\cdot,\cdot): [\Vh]^2\rightarrow \mathbb C$ is a computable sesquilinear form mimicking its continuous counterpart~$\b(\cdot,\cdot)$ defined in~\eqref{complete form}, and
$\Fh(\cdot):\Vh \rightarrow \mathbb C$ is a computable functional mimicking its continuous counterpart~$\langle \g,\cdot \rangle$ in~\eqref{rhs}.

The remainder of the section is organized as follows. In Section~\ref{subsection VE spaces}, we introduce the local and global nonconforming Trefftz virtual element spaces,
together with a set of unisolvent degrees of freedom.
Next, in Section~\ref{subsection local operators}, we introduce a couple of local (bulk and edge) projectors from local virtual element spaces into proper (plane/evanescent) wave spaces.
Such operators, in addition to proper suitable stabilizations, are instrumental for the construction of the discrete sesquilinear form~$\bh(\cdot,\cdot)$ and right-hand side~$\Fh(\cdot)$ in~\eqref{VEM}, which is the topic of Section~\ref{subsection forms and rhs}.

Henceforth, we will assume that three distributions~$\pboldo$, $\pboldt$, and~$\pboldtilde$, as in the construction of the nonconforming Sobolev spaces in~\eqref{nonconforming space}, are given, and that~$\pboldepsilon$ is the resulting edge distribution.

\subsection{Local Trefftz virtual element spaces and global nonconforming spaces} \label{subsection VE spaces}
Our aim here is to introduce local Trefftz-VE spaces tailored for the fluid-fluid interface problem~\eqref{weak monolithic formulation}, and subsequently to patch them into a global space in a nonconforming fashion.

To this purpose, given~$\E \in \taun$, we set the local space
\begin{equation} \label{local spaces}
\VhE := \{\vh \in H^1(\E) \mid \Delta \vh + \kcal^2 \vh = 0 \text{ in }\E,\; (\nabla \vh \cdot \nE + \im \kcal \vh) _{|\e} \in \PWtilde(\e)\, \forall \e \in \EE\},
\end{equation}
where we recall that the edge spaces~$\PWtilde(\e)$ are defined in~\eqref{edge plane-evanescent waves}.

We point out that, for every element~$\E \in \taun^1$, the space~$\VhE$ contains~$\PW^{(1)}_{\p^\E}(\E)$, the space of~$\pE=2\qE+1$ plane waves with wave number~$\k_1$  defined in~\eqref{bulk plane wave space};
besides, it contains additional functions that are not known in closed form (whence the name \textit{virtual}) and that are locally Trefftz with impedance traces in the space~$\PWtilde(\e)$, for all edges~$\e \in \EE$.

On the other hand, the local spaces over the elements~$\E \in \taun^2$ are designed in such a way that they contain~$\PWtildepqtilde(\E)$,
the space of~$\pE=2\qE+1$ plane waves  with wave number~$\k_2$ and~$\ptildeE=2\widetilde\q_\E$ evanescent waves defined in~\eqref{bulk plane and evanescent waves};
again, there are additional functions unknown in closed form inside (which however have impedance traces in the space of traces of plane and evanescent waves).
Such additional functions will be instrumental for building nonconforming global spaces, as described below.

Henceforth, we call~$\qE$ the \emph{effective degree} of the method on the elements~$\E\in \taun^1$, and~$\qE+\widetilde\q _\E$ the \emph{effective degree} of the method on the elements~$\E\in \taun^2$.


Given~$\E \in \taun$ and the associated local Trefftz-VE space~$\VhE$, we consider the following set of linear functionals on~$\VhE$. For all~$\e\in \EE$,
\begin{equation} \label{dofs}
\dof_{\e,\alpha} (\vh) := \frac{1}{\he} \int_\e \vh \overline{w_\alpha^\e} \, \ds \quad \forall \alpha=1,\dots,\pe,
\end{equation}
where $\{w_\alpha^\e\} _{\alpha=1} ^{\pe}$ is \emph{any} basis for the space~$\PWtilde(\e)$. This set of functionals is a set of unisolvent degrees of freedom, as stated in the following result.
\begin{lem} \label{lemma unisolvency}
Given~$\E \in \taun$, let us assume that~$\kcal_{|\E}$ is not a Dirichlet-Laplace eigenvalue on~$\E$. Then, the set of functionals defined in~\eqref{dofs} is a unisolvent set of degrees of freedom
for the space~$\VhE$.
\end{lem}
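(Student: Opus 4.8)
The plan is to exploit that there are exactly $\sum_{\e\in\EE}\pe$ functionals in~\eqref{dofs}; unisolvency is therefore equivalent to the conjunction of $(i)$ $\dim\VhE=\sum_{\e\in\EE}\pe$ and $(ii)$ the only $\vh\in\VhE$ annihilating all functionals in~\eqref{dofs} is $\vh=0$. I would establish these two facts separately: together they state that the linear map $\VhE\to\mathbb C^{\sum_\e\pe}$ induced by~\eqref{dofs} is an injection between spaces of the same finite dimension, hence a bijection, which is unisolvency.

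For $(i)$, I would show that $\VhE$ is parametrized by its impedance traces. Consider the map $\vh\mapsto\big((\nabla\vh\cdot\nE+\im\kcal\vh)_{|\e}\big)_{\e\in\EE}$ from $\VhE$ into $\prod_{\e\in\EE}\PWtilde(\e)$, which is well defined by the very definition~\eqref{local spaces}. Its inverse is furnished by the local impedance (Robin) Helmholtz boundary value problem: for data $g$ with $g_{|\e}\in\PWtilde(\e)$ on every $\e\in\EE$, there is a unique $v\in H^1(\E)$ solving $\Delta v+\kcal^2 v=0$ in $\E$ with $\nabla v\cdot\nE+\im\kcal v=g$ on $\partial\E$. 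Existence follows from the Fredholm alternative applied to the associated sesquilinear form (a compact perturbation, via the $-\kcal^2$ zero-order term, of a form that is coercive on $H^1(\E)$ once its real and imaginary parts are combined), and uniqueness from testing the homogeneous problem with the solution itself: the imaginary part of the resulting identity forces the Dirichlet trace to vanish, whence the Cauchy data vanish and unique continuation yields $v\equiv0$. Thus the impedance-trace map is a linear isomorphism and $\dim\VhE=\sum_{\e\in\EE}\dim\PWtilde(\e)=\sum_{\e\in\EE}\pe$, precisely the number of functionals in~\eqref{dofs}.

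For $(ii)$, let $\vh\in\VhE$ satisfy $\dof_{\e,\alpha}(\vh)=0$ for all $\e\in\EE$ and all $\alpha$; equivalently, $\vh_{|\e}$ is $L^2(\e)$-orthogonal to $\PWtilde(\e)$ on each edge. Testing $\Delta\vh+\kcal^2\vh=0$ against $\overline{\vh}$ and integrating by parts gives
\begin{equation*}
-\int_\E|\nabla\vh|^2\,\dx+\int_{\partial\E}(\nabla\vh\cdot\nE)\,\overline{\vh}\,\ds+\kcal^2\int_\E|\vh|^2\,\dx=0.
\end{equation*}
On each edge I would substitute $\nabla\vh\cdot\nE=g_\e-\im\kcal\vh$, with $g_\e:=(\nabla\vh\cdot\nE+\im\kcal\vh)_{|\e}\in\PWtilde(\e)$. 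Because $g_\e$ lies in $\PWtilde(\e)$ while $\vh_{|\e}$ is orthogonal to that space, the term $\int_\e g_\e\overline{\vh}\,\ds$ vanishes, so the boundary contribution reduces to $-\im\kcal\int_{\partial\E}|\vh|^2\,\ds$. Taking the imaginary part of the identity and using $\kcal_{|\E}\neq0$ then forces $\vh=0$ on $\partial\E$. Finally $\vh$ solves $\Delta\vh+\kcal^2\vh=0$ in $\E$ with vanishing Dirichlet trace, so, since $\kcal_{|\E}^2$ is assumed not to be a Dirichlet--Laplace eigenvalue of $\E$, we conclude $\vh\equiv0$; this is exactly where the hypothesis enters.

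The main obstacle is the well-posedness of the local impedance problem underlying step $(i)$, and specifically its uniqueness: the energy identity only delivers the vanishing of the Dirichlet trace, and one must combine this with the impedance condition to obtain vanishing Cauchy data before invoking unique continuation. I would also take care to record that the non-eigenvalue hypothesis is genuinely needed only in the last step of $(ii)$ (the homogeneous Dirichlet problem), whereas the impedance problem in $(i)$ is well posed unconditionally.
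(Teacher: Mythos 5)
Your proof is correct and follows essentially the same route as the paper's: the paper omits the argument by deferring to~\cite[Lemma 3.1]{ncTVEM_Helmholtz}, whose proof is precisely your two-step scheme — a dimension count via the unconditional well-posedness of the local impedance problem (so that $\dim\VhE=\sum_{\e\in\EE}\pe$), followed by injectivity through integration by parts, cancellation of the impedance-trace term by the vanishing moments, the imaginary-part argument forcing $\vh=0$ on~$\partial\E$, and the Dirichlet non-eigenvalue hypothesis to conclude $\vh\equiv0$. Your closing remarks on where the hypothesis enters and on unique continuation for the impedance problem's uniqueness are accurate as well.
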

\begin{proof}
The proof follows the lines of that of \cite[Lemma 3.1]{ncTVEM_Helmholtz} and is therefore omitted here.
\end{proof}

\begin{remark}
Note that the assumption on~$\kcal_{|\E}$ in Lemma \ref{lemma unisolvency} actually results in a condition on the size of the product~$\hE \kcal_{|\E}$, see~\cite{ncTVEM_Helmholtz}.
More precisely, for~$\hE$ sufficiently small, $\kcal_{|\E}$ is not a Dirichlet-Laplace eigenvalue on~$\E$.
\end{remark}

Having this, we introduce the set of local canonical basis functions $\{\varphi_{\hat{e},\hat{\alpha}}\}_{\hat{e},\hat{\alpha}}$ by duality:
\begin{equation*}
\dof_{\e,\alpha} (\varphi_{\hat{e},\hat{\alpha}}) := \delta_{e,\hat{e}} \delta_{\alpha,\hat{\alpha}}, \quad \forall \e,\hat{\e} \in \En, \, \forall \alpha = 1,\dots,\pe, \, \forall \hat{\alpha}=1,\dots,\p_{\hat \e},
\end{equation*}
where~$\delta_{\cdot, \cdot}$ here denotes the standard Kronecker delta.

The choice of the degrees of freedom in~\eqref{dofs} together with the definition of the spaces~$\PWtilde(\e)$ in~\eqref{edge plane-evanescent waves} allows for the construction of the global nonconforming Trefftz virtual element space
\begin{equation} \label{global nc spaces}
\Vh:= \{\vh \in H_{\pboldepsilon}^{1,nc} (\taun) \mid \vh{}_{|\E} \in \VhE \, \forall \E \in \taun \},
\end{equation}
where we recall that the nonconforming Sobolev space~$H_{\pboldepsilon}^{1,nc}(\taun)$ is defined in~\eqref{nonconforming space}.

Moreover, the global set of the degrees of freedom is built by a nonconforming coupling (\emph{\`{a} la} Crouzeix-Raviart) of the local counterparts~\eqref{dofs}, see~\cite{ncTVEM_Helmholtz}.


\subsection{Local projectors} \label{subsection local operators}
In this section, we introduce a couple of local projectors which will be instrumental for the design of the method~\eqref{VEM}.

First of all, for all~$\E \in \taun^1$, we define the local operator $\PiEp: \VhE \rightarrow \PW^{(1)}_{\p^\E}(\E)$ by 
\begin{equation} \label{Pi projector}
\aE(\PiEp\vh, w^{(1),\E}) = \aE(\vh, w^{(1),\E}) \quad \forall \vh \in \VhE,\; \forall w^{(1),\E} \in \PW^{(1)}_{\p^\E}(\E).
\end{equation}
Such operator is computable by means of the degrees of freedom~\eqref{dofs}. In fact, an integration by parts yields
\begin{equation*}
\aE(\vh, w^{(1),\E}) = \int_{\partial \E} \vh \overline{\nabla w^{(1),\E}  \!\! \cdot \nE} \, \ds,
\end{equation*}
which is computable since $(\nabla w^{(1),\E} \!\! \cdot \nE)_{|\e} \in \PWtilde(\e)$ for all~$\e \in \EE$.

Besides, $\PiEp$ is well-defined under the assumption that the size of the element~$\E$ is sufficiently small, see~\cite[Proposition 3.2]{ncTVEM_Helmholtz} for more details.
\medskip

For all~$\E \in \taun^2$, we also introduce the local projector $\PiEpqtilde : \VhE \rightarrow \PWtildepqtilde (\E)$ which is defined analogously to~$\PiEp$ in~\eqref{Pi projector} with the only difference that the space $\PW^{(1)}_{\p^\E}(\E)$ is replaced by $\PWtildepqtilde (\E)$. 
The well-posedness of~$\PiEpqtilde$ is provided by the invertibility of the matrix $\GboldEtwohat \in \mathbb C^{(\phEhat + \ptildeE) \times (\phEhat + \ptildeE)}$ defined by
\begin{equation} \label{matrix Gbold}
\GboldEtwohat_{j,\ell} := 
\begin{cases}
\aE(w_\ell^{(2),{\E}}, w_j^{(2),{\E}}), & \text{ if } j,\ell \leqslant \phEhat \\

\aE(w_{\ell-\phEhat}^{EV,{\E}}, w_j^{(2),{\E}}), & \text{ if } j \leqslant \phEhat, \ell > \phEhat \\
\aE(w_\ell^{(2),{\E}}, w_{j-\phEhat}^{EV,{\E}}), & \text{ if } j > \phEhat, \ell \leqslant \phEhat \\
\aE(w_{\ell-\phEhat}^{EV,{\E}}, w_{j-\phEhat}^{EV,{\E}}), & \text{ if } j,\ell > \phEhat \\
\end{cases}
\end{equation}
for all $j,\ell=1,\dots,\phEhat + \ptildeE$.

By investigating the behaviour of the minimal (absolute) eigenvalue of~$\GboldEtwohat$ in terms of the wave number~$k_2$ on the reference element~${\E}:=(0,1)^2$,
one can observe that such a minimal eigenvalue becomes very small when~$k_2^2$ is close to a Neumann-Laplace eigenvalue $\nu_{m,n}:=\pi^2(m^2+n^2)$, $m,n \in \N_{0}$, on ${\E}$, see Figure~\ref{fig:mineig}.
\begin{figure}[h]
\centering
\includegraphics[width=0.5\linewidth]{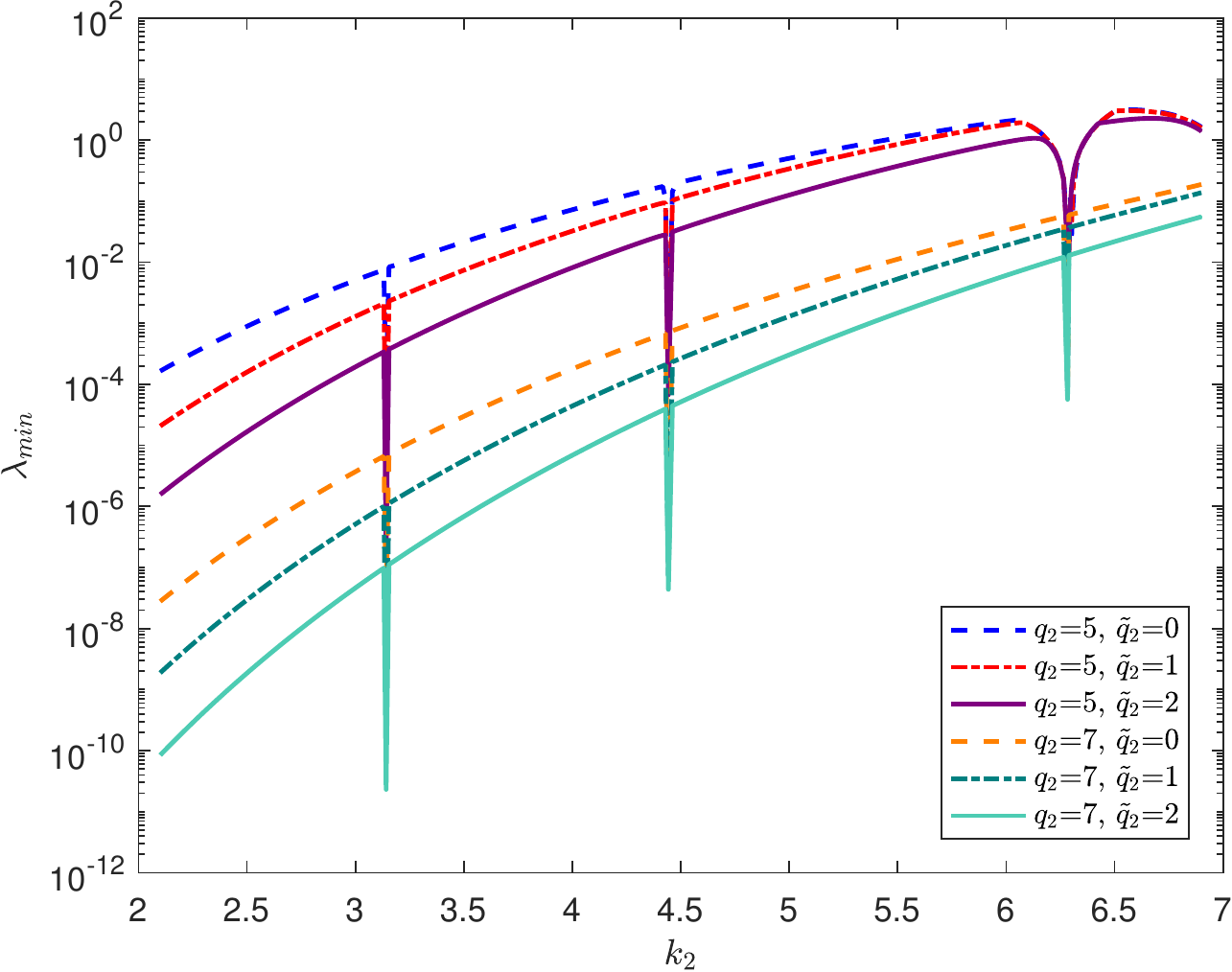}
\caption{Minimal (absolute) eigenvalues of the matrix~$\GboldEtwohat$ in~\eqref{matrix Gbold} in terms of the wave number~$k_2$ with~$n_1=2$ and~$n_2=1$.
The effective plane and evanescent wave degrees are denoted by~$q_2$ and~$\widetilde q_2$, respectively.}
\label{fig:mineig}
\end{figure}
This indicates that, assuming~$\k_2^2$ to be separated from the Neumann-Laplace eigenvalues, the local projector~$\PiEpqtilde$ is well-defined.
\medskip

The third operator we introduce is the boundary edge~$L^2$ projector $\Pizepqtilde: \VhE{}_{|\e} \rightarrow \PWtilde(\e)$, which is defined for all edges~$\e \in \Enb$ by
\begin{equation*}
(\Pizepqtilde \vh, w^e)_{0,\e} = (\vh,w^e)_{0,\e} \quad \forall \vh \in \VhE{}_{|\e},\;\forall w^e \in \PWtilde(\e).
\end{equation*}
Such a projector is directly computable starting from the local degrees of freedom in~\eqref{dofs};
moreover, it is well-defined owing to the coercivity of the edge~$L^2$ norm.

\subsection{Discrete sesquilinear forms and right-hand side} \label{subsection forms and rhs}
Here, we specify the discrete sesquilinear form~$\bh(\cdot,\cdot)$ and the discrete right-hand side~$\Fh(\cdot)$ characterizing the method~\eqref{VEM}.

To begin with, we underline that the continuous counterparts~$\b(\cdot,\cdot)$ and~$\langle \g,\cdot \rangle$ in~\eqref{complete form} and~\eqref{rhs}, respectively, are in general not explicitly computable when applied to functions in~$\Vh$ defined in~\eqref{global nc spaces},
since the functions in the global nonconforming Trefftz-VE space are not known in closed form.

Therefore, we proceed following the standard VEM gospel~\cite{VEMvolley}. First, we introduce, for all~$\E \in \taun$, local stabilizing sesquilinear forms $\SE(\cdot,\cdot):[\ker(\PiE)]^2 \rightarrow \mathbb C$,
where~$\PiE$ is either~$\PiEp$ or~$\PiEpqtilde$, depending on whether~$\E \in \taun^1$ or~$\E \in \taun^2$; such sesquilinear forms are referred to as \emph{stabilizations} and they
have to be computable  employing only the degrees of freedom of the local space~$\VhE$, see~\eqref{dofs}.

Depending on the choice of the stabilizations~$\SE(\cdot,\cdot)$, we propose a family of discrete sesquilinear forms~$\bh(\cdot,\cdot)$ characterizing method~\eqref{VEM}:
\[
\bh(\uh,\vh) := \sum_{\E \in \taun} \ahE(\uh{}_{|\E} , \vh{}_{|\E}) + \im \ch (\kcal  \uh, \vh) \quad \forall \uh,\,\vh \in \Vh,
\]
where, for all~$\E\in \taun$,
\begin{equation} \label{local discrete sesquilinear form}
\ahE(\uh,\vh) := \aE(\PiE \uh, \PiE \vh)+ \SE( (I - \PiE ) \uh, (I-\PiE ) \vh) \quad \forall \uh,\, \vh \in \VhE,
\end{equation}
with~$\PiE=\PiEp$ for all~$\E \in \taun^1$ and $\PiE=\PiEpqtilde$ for all~$\E \in \taun^2$, and where
\begin{equation*}
\ch (\kcal  \uh, \vh) := \sum_{\e \in \Enb} (\kcal \Pizepqtilde ({\uh}_{|\e}), \Pizepqtilde ({\vh}_{|\e}) )_{0,\e} \quad \forall \uh,\, \vh \in \Vh.
\end{equation*}
A discussion on the requirements that the stabilizations~$\SE(\cdot,\cdot)$ have to satisfy in order to entail well-posedness and error estimates of the method~\eqref{VEM} is the object of~\cite[Theorem 4.4]{ncTVEM_Helmholtz}.
An explicit choice for the stabilization~$\SE(\cdot, \cdot)$ is provided in~\eqref{modified D-recipe}.
\medskip

The discrete right-hand side is defined as
\begin{equation*}
\Fh(\vh) := \sum_{\e \in \Enb} (\g, \Pizepqtilde (\vh{}_{|\e}))_{0,\e} \quad \forall \vh \in \Vh.
\end{equation*}
Note that the right-hand side is approximated by employing 1D quadrature formulas. In fact, this is the only occurrence where quadrature formulas are needed.

\section{Details on the implementation and numerical results} \label{section numerical results}
In this section, we first discuss some details of the implementation of method~\eqref{VEM} in Section~\ref{subsection implementational aspects}, and then, we present numerical experiments for a series of different test cases in Section~\ref{subsection numerical experiments}.

\subsection{Implementation aspects} \label{subsection implementational aspects}
The implementation of the method is performed analogously to the case of constant~$\kcal$, see~\cite{TVEM_Helmholtz_num}. However, for the sake of clarity and completeness, we will give a few details below.
It is of great importance to underline that the implementation of the method follows the lines of that of standard nonconforming FEM (and VEM);
in particular, local matrices are computed and eventually patched into a global one.

\paragraph*{Or\-tho\-go\-na\-li\-za\-tion-and-fil\-te\-ring process.}
First of all, we highlight that (cf. Remark~\ref{remark orthogonalization}) for all edges~$\e\in \En$
we will not directly use the traces of plane waves and evanescent waves defined in~\eqref{plane waves} and~\eqref{evanescent waves}, respectively, as basis functions for the spaces~$\PWtilde(\e)$.
In fact, by doing that, we would bump into numerical instabilities due to the high condition numbers of the local~$L^2$ edge mass matrices related to these basis functions, see~\cite{TVEM_Helmholtz_num}.
Instead, we will use the numerical recipe based on an or\-tho\-go\-na\-li\-za\-tion-and-fil\-te\-ring process proposed in \cite[Algorithm 2]{TVEM_Helmholtz_num},
which allows $(i)$ to automatically filter out redundancies in the edge basis functions, depending on the choice of a filtering parameter, and $(ii)$ to reduce the number of degrees of freedom needed for the convergence of the method, as discussed in~\cite[Section 5.3]{TVEM_Helmholtz_num};
see Algorithm~\ref{algorithm orthog process}.
\begin{algorithm}[h]
\caption{}
\label{algorithm orthog process}
Let~$\sigma>0$ be a given ``filtering'' tolerance. For all the edges~$e \in \En$:
\begin{enumerate}
\item Assemble the real-valued, symmetric, and possibly singular matrix $\boldsymbol{G}_0^e \in \R^{\rho_\e \times \rho_\e}$ given by
\begin{equation*}
(\boldsymbol{G}_0^e)_{j,\ell}= (\nu_{\ell}^e,\nu_j^e)_{0,e}
\quad \forall j,\ell=1,\dots,\rho_\e,
\end{equation*}
where~$\nu_{\ell}^e$ are the traces of \textit{all} the basis functions belonging to the edge space~$\PWtilde(\e)$ defined in~\eqref{edge plane-evanescent waves}. Let~$\rho_\e$ be their number. 
\item Compute the eigendecomposition:
\begin{equation*}
\boldsymbol {G}_0^\e \boldsymbol{Q}^\e =   \boldsymbol{Q}^\e \boldsymbol {\Lambda}^\e,
\end{equation*}
where $\boldsymbol{Q}^e \in \R^{\rho_\e \times \rho_\e}$ is a matrix whose columns are right-eigenvectors, and $\boldsymbol{\Lambda}^\e \in\R^{\rho_\e \times \rho_\e}$ is a diagonal matrix containing the corresponding eigenvalues. 
\item Determine the eigenvalues with (absolute) value smaller than the tolerance~$\sigma$ and remove the columns of~$\boldsymbol{Q}^e$ corresponding to these eigenvalues.
Denote the number of remaining columns of~$\boldsymbol{Q}^e$ by~$\pehat \le \rho_\e$.
The remaining columns of~$\boldsymbol{Q}^e$ are relabeled by~$1,\dots,\pehat$.
\item Define the new~$L^2(\e)$ orthogonal edge functions~$\wlehat$, $\ell=1,\dots,\pehat$, in terms of the old ones~$\nu_r^e$, $r=1,\dots,\rho_\e$, as
\begin{equation*}
\wlehat:=\sum_{r=1}^{\rho_e} \boldsymbol{Q}^e_{r,\ell} \, \nu_r^e.
\end{equation*}
\end{enumerate}
\end{algorithm}

Importantly, the above-mentioned strategy naturally dovetails with the supplement of special functions to the standard plane wave spaces and the use of plane wave spaces with varying degree from element to element.
The traces of the corresponding functions are simply added edgewise first, as they are needed for the construction of the method (this leads to an increase of the number of degrees of freedom);
afterwards, the relevant information is extracted using Algorithm~\ref{algorithm orthog process} and the number of degrees of freedom is reduced significantly.
In all the forthcoming numerical experiments, the tolerance~$\sigma$ will be set to~$10^{-13}$. The effect of the choice of~$\sigma$ on the performance of the method was investigated in~\cite{TVEM_Helmholtz_num}, in the case of constant~$\kcal$.

Henceforth, we use the convention that the local degrees of freedom and canonical basis functions associated to the orthogonalized basis functions~$\wlehat$ will be hooded by a hat.

\paragraph*{Global and local matrices.}
As usual in the standard nonconforming FEM and VEM philosophy, the global system of linear equations is assembled in terms of the local elementwise contributions.
Setting $\phatE:=\sum_{\e \in \EE} \pehat$ and recalling that~$\Ne$ denotes the number of edges of~$\E$, we define the following matrices, see~\cite{hitchhikersguideVEM, TVEM_Helmholtz_num}:
\begin{itemize}
\item for all~$\E \in \taun^1$:
\begin{itemize}
\item[*] $\GboldEone \in \mathbb C^{\phE \times \phE}$ with
$\GboldEone_{j,\ell} := \aE(w_\ell^{(1),\E}, w_j^{(1),\E})$, for all $j,\ell=1,\dots,\phE$;
\item[*] $\DboldEone \in \mathbb C^{\phatE \times \phE}$ with $\DboldEone_{(r,j),\ell} := \widehat{\dof}_{r,j}(w_\ell^{(1),\E})$, for all $r=1,\dots,\Ne$, $j=1,\dots,\widehat{p}_{e_r}$, and $\ell=1,\dots,\phE$;
\item[*] $\BboldEone \in \mathbb C^{\phE \times \phatE}$ with $\BboldEone_{j,(s,\ell)} := \aE(\widehat{\varphi}_{s,\ell},w_j^{(1),\E})$, for all $j=1,\dots,\phE$, $s=1,\dots,\Ne$, and $\ell=1,\dots,\widehat{p}_{e_s}$;
\end{itemize} 
\item for all $\E \in \taun^2$:
\begin{itemize}
\item[*] $\GboldEtwo \in \mathbb C^{(\phE + \ptildeE) \times (\phE + \ptildeE)}$ as in~\eqref{matrix Gbold};
\item[*] $\DboldEtwo \in \mathbb C^{\phatE \times (\phE+\ptildeE)}$ with 
\begin{equation*}
\DboldEtwo_{(r,j),\ell} :=
\begin{cases} 
\widehat{\dof}_{r,j}(w_\ell^{(2),\E}),  & \text{ if } \ell \le \phE \\
\widehat{\dof}_{r,j}(w_{\ell-\phE}^{EV,\E}),  & \text{ if } \ell > \phE, \\
\end{cases}
\end{equation*}
for all $r=1,\dots,\Ne$ and $j=1,\dots,\widehat{p}_{e_r}$;
\item[*] $\BboldEtwo \in \mathbb C^{(\phE+\ptildeE) \times \phatE}$ with 
\begin{equation*}
\BboldEtwo_{j,(s,\ell)} :=
\begin{cases}
\aE(\widehat{\varphi}_{s,\ell},w_j^{(2),\E}), & \text{ if } j \le \phE \\
\aE(\widehat{\varphi}_{s,\ell},w_{j-\phE}^{EV,\E}), & \text{ if } j > \phE,
\end{cases}
\end{equation*}
for all $s=1,\dots,\Ne$, and $\ell=1,\dots,\widehat{p}_{e_s}$;
\end{itemize} 
\end{itemize}
Having this, following \cite{TVEM_Helmholtz_num}, the matrix representation $\AboldEone$ of $\ahE(\cdot,\cdot)$ is given, for all $\E \in \taun^1$, by
\[
\AboldEone := \overline{\PiboldEsone}^T \GboldEone \PiboldEsone + (\overline{\IboldEone-\PiboldEone})^T \SboldEone ( \IboldEone - \PiboldEone),
\]
where $\IboldEone \in \mathbb C^{\phatE \times \phatE}$ is the identity matrix, $\SboldEone$ is the matrix representation of the stabilizing sesquilinear form $\SE(\cdot,\cdot)$, and 
\[
\PiboldEsone := (\GboldEone)^{-1} \BboldEone \in \mathbb C^{\phE \times \phatE},\quad 
\PiboldEone: = \DboldEone (\GboldEone)^{-1} \BboldEone \in \mathbb C^{\phatE \times \phatE}.
\]
The matrix~$\AboldEtwo$ related to~$\ahE(\cdot,\cdot)$ for~$\E \in \taun^2$ is computed analogously.

Regarding the Robin part, given~$\e \in \Enb$, the local matrix representation~$\Rbolde$ of $(\kcal \Pizepqtilde \cdot, \Pizepqtilde \cdot)_{0,\e}$ is
\[
\Rbolde:= \overline{\Bbolde}^T \overline{\Gbolde}^{-T} \Bbolde,
\]
where~$\Gbolde$ and $\Bbolde \in \C^{\pehat \times \pehat}$ are given by $(\Gbolde)_{j,\ell}:=(\widehat{w}_\ell^e,\widehat{w}_j^e)_{0,e}$ and  $(\Bbolde)_{j,\ell}:=(\widehat{\varphi}_{\e,\ell},\widehat{w}_j^e)_{0,e}=\he \delta_{j,\ell}$, for all $j,\ell=1,\dots,\pehat$, respectively.

The right-hand side $\Fh(\vh)$ is computed by expressing $\Pizepqtilde (\vh{}_{|\e})$ in terms of the orthogonalized basis functions $\widehat{w}_{\ell}^e$ and using numerical integration.
Note that this is the only occurrence, where numerical quadratures rules are needed. All the other quantities can indeed be computed exactly using the degrees of freedom, see \cite{TVEM_Helmholtz_num}.

\subsection{Numerical experiments} \label{subsection numerical experiments}
In this section, we employ the method~\eqref{VEM} to approximate the solution to~\eqref{weak monolithic formulation} in three different test cases, using the notation of Section~\ref{section fluid-fluid interface problem}:
\begin{itemize}
\item \texttt{test case~1}: given an incoming traveling plane wave with $\thetainc \ge \thetacrit$, this wave is partially reflected at the interface~$\Gamma$ and a plane wave is transmitted in the subdomain $\Omega_2$;
\item \texttt{test case~2}: given an incoming traveling plane wave with $\thetainc < \thetacrit$, the wave is completely reflected and evanescent modes appear in~$\Omega_2$;
\item \texttt{test case~3}: we consider the same situation as in \texttt{test case~1}, but employing here meshes with elements that are cut by the interface~$\Gamma$.
\end{itemize}
Note that for all the test cases, the exact solution is known in closed form. In fact, assuming that $\uinc$ is an incoming traveling plane wave with angle $\thetainc$ and wave number $\k_1$, i.e.,
\begin{equation*}
\uinc(\x) := \exp(i k_1 \d \cdot \x),\quad \d := (\cos(\thetainc), \sin(\thetainc) ),
\end{equation*}
the solution to the global problem~\eqref{weak monolithic formulation} is given by
\begin{equation} \label{exact solution}
u := 
\begin{cases}
\uinc + u_R & \text{in }\Omega_1\\
u_T & \text{in } \Omega_2.\\
\end{cases}
\end{equation}
The reflected and the transmitted waves, respectively, can be expressed as
\begin{equation} \label{reflected transmitted waves}
u_R(x,y) := R \exp(i k_1 \d \cdot (x,\, -y)),\quad u_T(x,y) := T \exp(ik_2 (K_1 x + K_2  y)),
\end{equation}
where the coefficients $R$, $T$, $K_1$ and $K_2$ are computed by employing the transmission conditions in~\eqref{strong transmission problem}: 
\begin{equation*}
K_1 := \k_1/k_2 \cos (\thetainc), \quad K_2 := \sqrt{1-k_1^2/k_2^2 \cos^2(\thetainc)}, \quad R := \frac{k_1 \sin(\thetainc)-k_2 K_2}{k_1 \sin(\thetainc)+k_2 K_2},\quad T:=1+R.
\end{equation*}
Since an explicit representation of the numerical solution $\un$ is not available in closed form inside each element, it is not possible to compute the (exact) $H^1$ and $L^2$ discretization errors directly.
Instead, as usually done in VEM, we compute the approximate relative errors
\begin{equation} \label{rel_errors}
\frac{\lVert u-\Pi \un \rVert_{1,k,\taun}}{\lVert u \rVert_{1,k,\Omega}}, \quad
\frac{\lVert u-\Pi \un \rVert_{0,\taun}}{\lVert u \rVert_{0,\Omega}}, 
\end{equation} 
where ${\Pi}_{|\E}(\vh)  =\PiEp (\vh)$ for all $\vh \in \VhE$, for all $\E \in \taun^1$, and ${\Pi}_{|\E}(\vh)=\PiEpqtilde(\vh)$ for all $\vh \in \VhE$, for all $\E \in \taun^2$, are the local projectors defined in~\eqref{Pi projector}.

As stabilization $\SE(\cdot,\cdot)$ in~\eqref{local discrete sesquilinear form}, we employ
\begin{equation}\label{modified D-recipe}
\SE(\un,\vn)= \sum_{s=1}^{n_\E} \sum_{\ell=1}^{\widehat{p}_{e_s}} \aE(\Pi \varphi_{s,\ell}, \Pi \varphi_{s,\ell}) \dof_{s,\ell}(\un) \overline{\dof_{s,\ell}(\vn)},
\end{equation}
where $\Pi^\E$ is either $\PiEp$ or $\PiEpqtilde$, depending on $\E$.
Such a stabilization was introduced and discussed in~\cite{ncTVEM_Helmholtz,TVEM_Helmholtz_num} and can be seen as a generalization of the \emph{diagonal recipe} stabilization in~\cite{VEM3Dbasic, fetishVEM, fetishVEM3D}.

\subsubsection{\texttt{Test case 1} (incoming plane wave with $\thetainc>\thetacrit$)} \label{subsubsection testcase1}
We first consider the test case of an incoming plane wave with incident angle $\thetainc>\thetacrit$. In this case, reflection and transmission of plane waves take place.

As refraction indices, we pick $n_1=2$ and $n_2=1$. Accordingly with~\eqref{critical angle}, the critical angle is $\thetacrit=60^\circ$.
We consider~$\thetainc=75^\circ$ and~$k=7$, i.e., local wave numbers~$\k_1=14$ and~$\k_2=7$. The exact solution is given in~\eqref{exact solution} and its real part is depicted in Figure~\ref{fig:exact_sol_testcase1}.
\begin{figure}[h]
\begin{center}
\begin{minipage}{0.48\textwidth} 
\includegraphics[width=\textwidth]{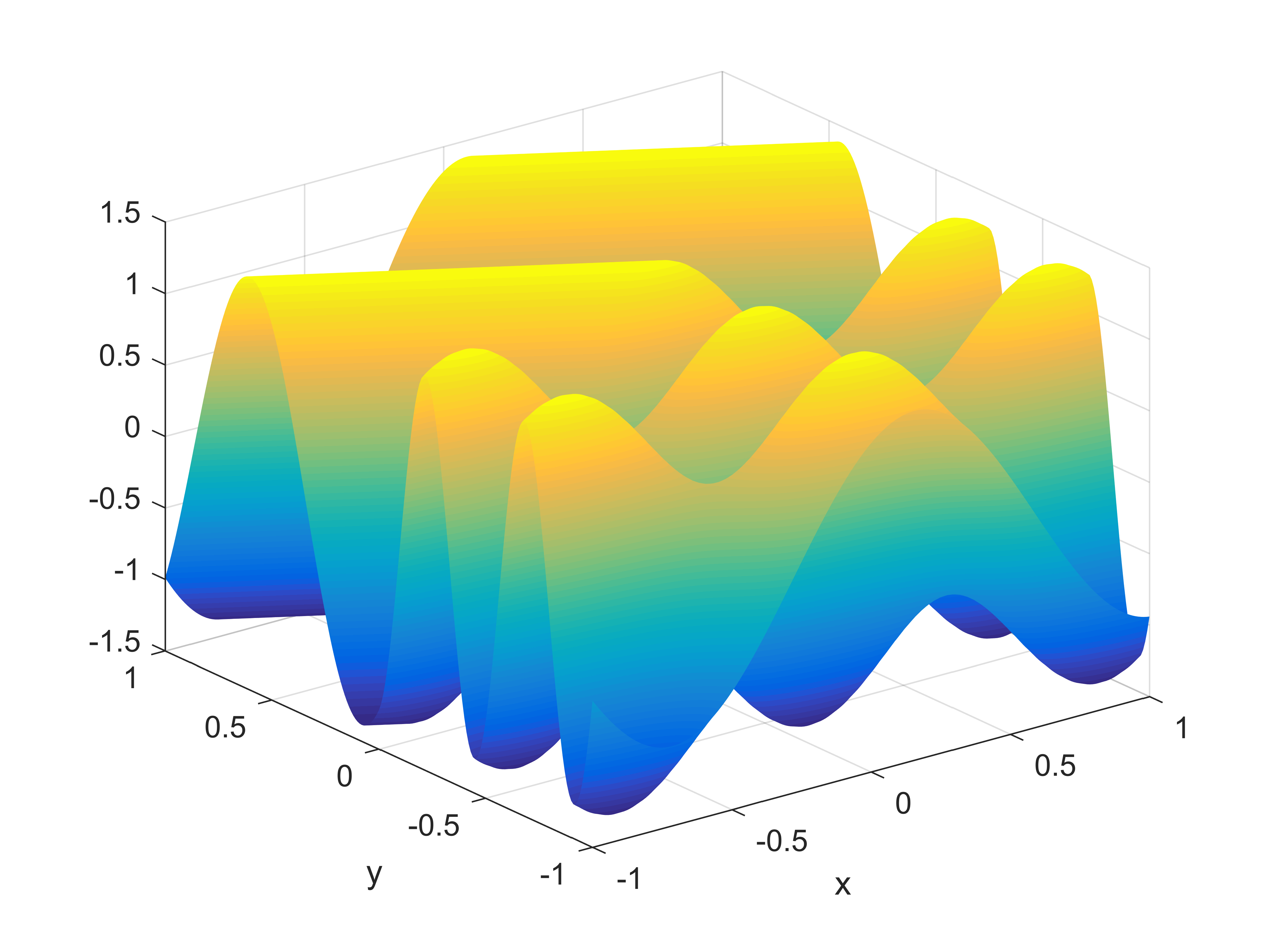}
\end{minipage}
\hfill
\begin{minipage}{0.48\textwidth}
\includegraphics[width=\textwidth]{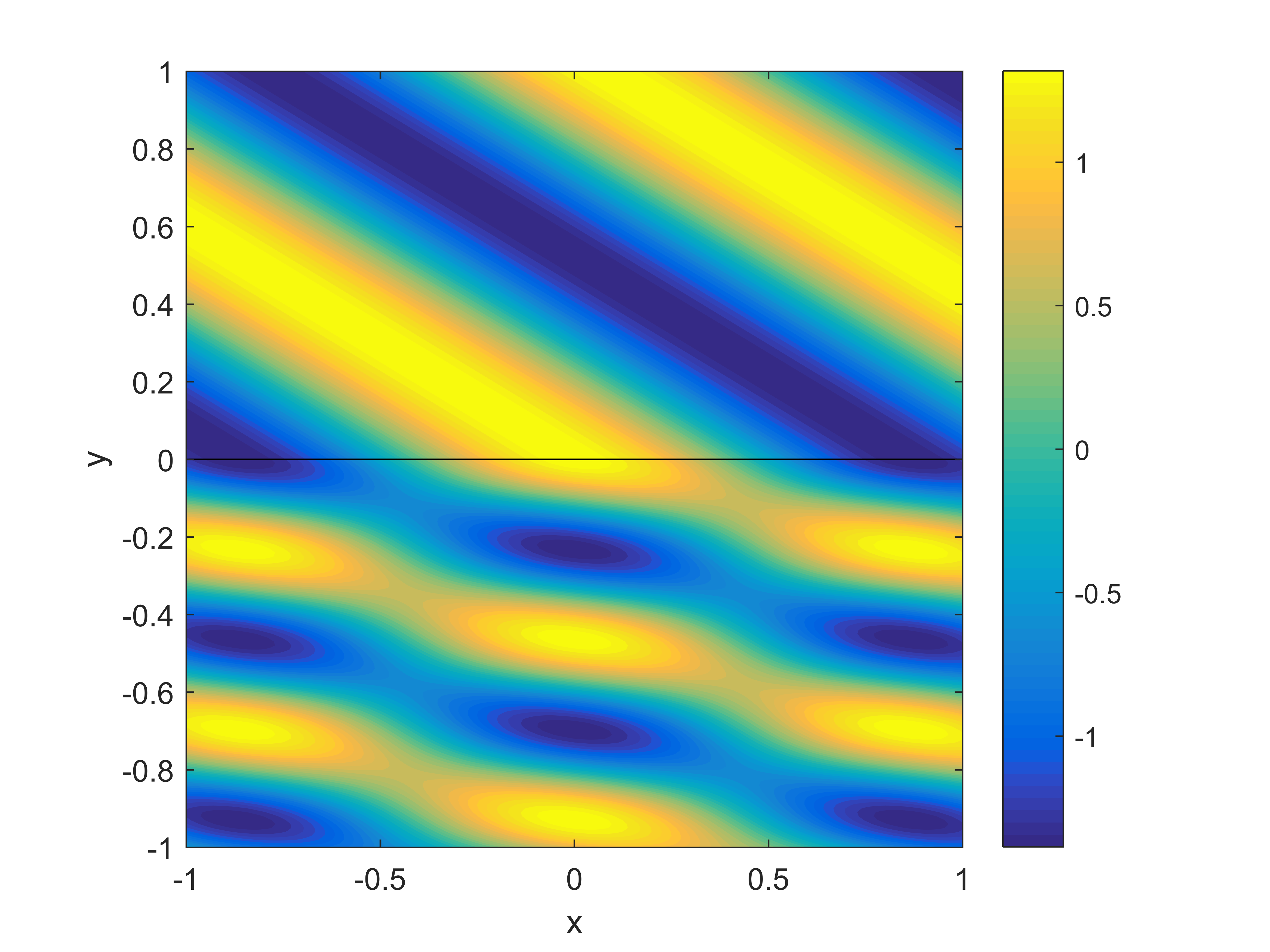}
\end{minipage}
\end{center}
\caption{Real part of the exact solution~$u$ given by~\eqref{exact solution} with $\k=7$, $n_1=2$, $n_2=1$, and $\thetainc=75^\circ$. \textit{Left}: surface plot. \textit{Right}: contour plot, where the black line indicates the interface~$\Gamma$.}
\label{fig:exact_sol_testcase1} 
\end{figure}

We study the~$\h$- and $\p$-versions of the method for the problem~\eqref{strong transmission problem}, where the impedance datum~$\g$ is computed accordingly with the exact analytical solution.
Inside each subdomains $\Omega_1$ and $\Omega_2$ only plane waves with the same set of equidistributed directions are employed.
In the following, we will always write $q_1$, $q_2$ and $\widetilde{q}_2$ when the effective plane/evanescent wave degrees do not vary elementwise within each subdomain.

For the $h$-version, we study the behaviour of the error curves for different values of $q_1$ and $q_2$, namely $q_1=q_2=4$, $q_1=q_2=6$, and $q_1=12$ with $q_2=6$.
Recall that the numbers of plane waves in $\Omega_1$ and $\Omega_2$, respectively, are given by $p_1=2q_1+1$ and $p_2=2q_2+1$.
Since no evanescent modes are expected to appear in $\Omega_2$ and the transmitted solution is a plane wave, we do not add evanescent waves to the local spaces, i.e., we take $\widetilde{q}_2=0$.
We employ sequences of standard regular Cartesian meshes and Voronoi meshes (reflected across the~$x$- and the $y$-axes), see Figure~\ref{fig:voronoi meshes}.
The results are depicted in Figure~\ref{fig:testcase1h}. 

\begin{figure}[H]
\begin{center}
\begin{minipage}{0.3\textwidth} 
\includegraphics[width=\textwidth]{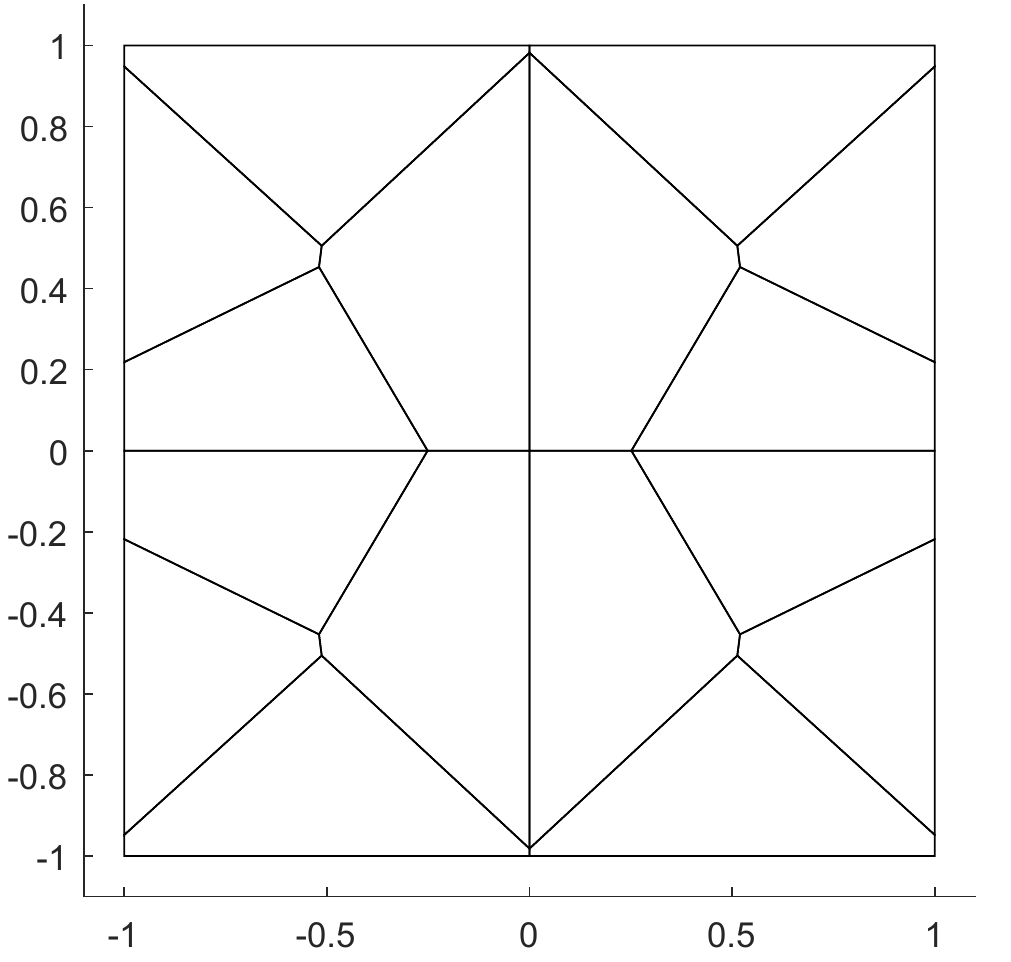}
\end{minipage}
\hfill
\begin{minipage}{0.3\textwidth}
\includegraphics[width=\textwidth]{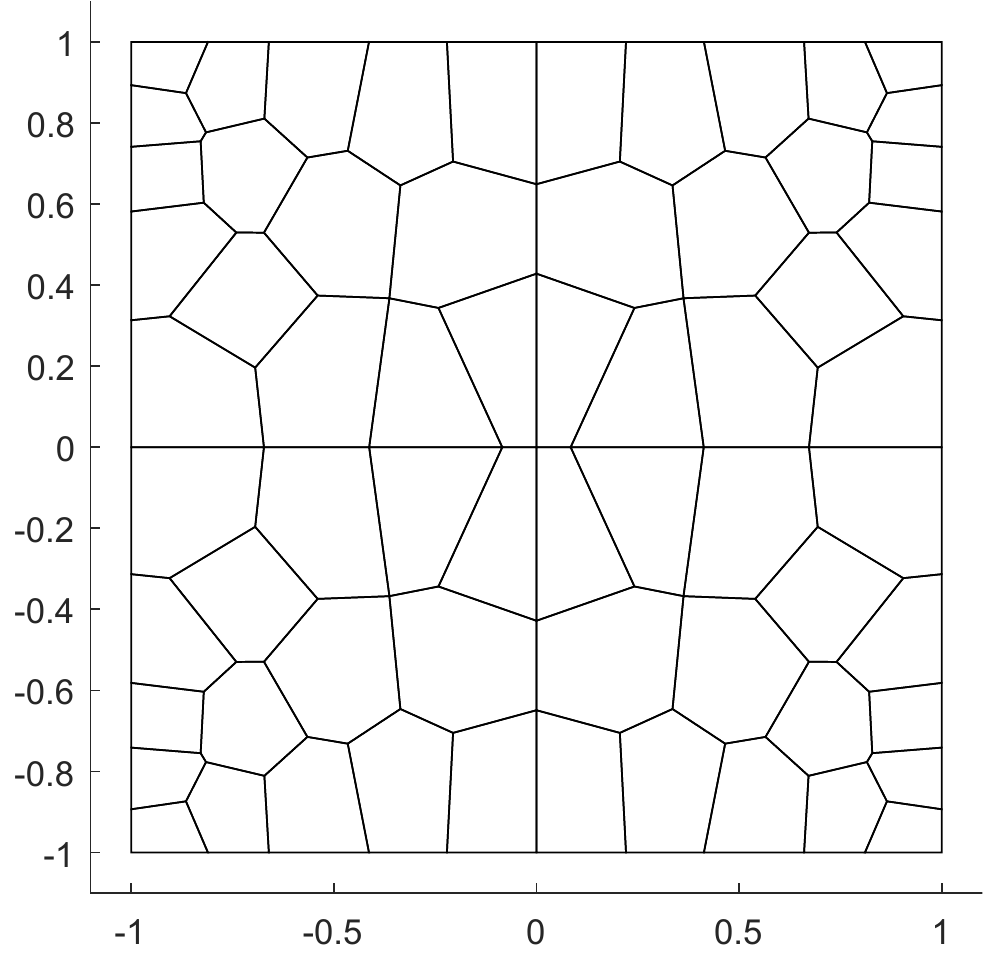}
\end{minipage}
\hfill
\begin{minipage}{0.3\textwidth}
\includegraphics[width=\textwidth]{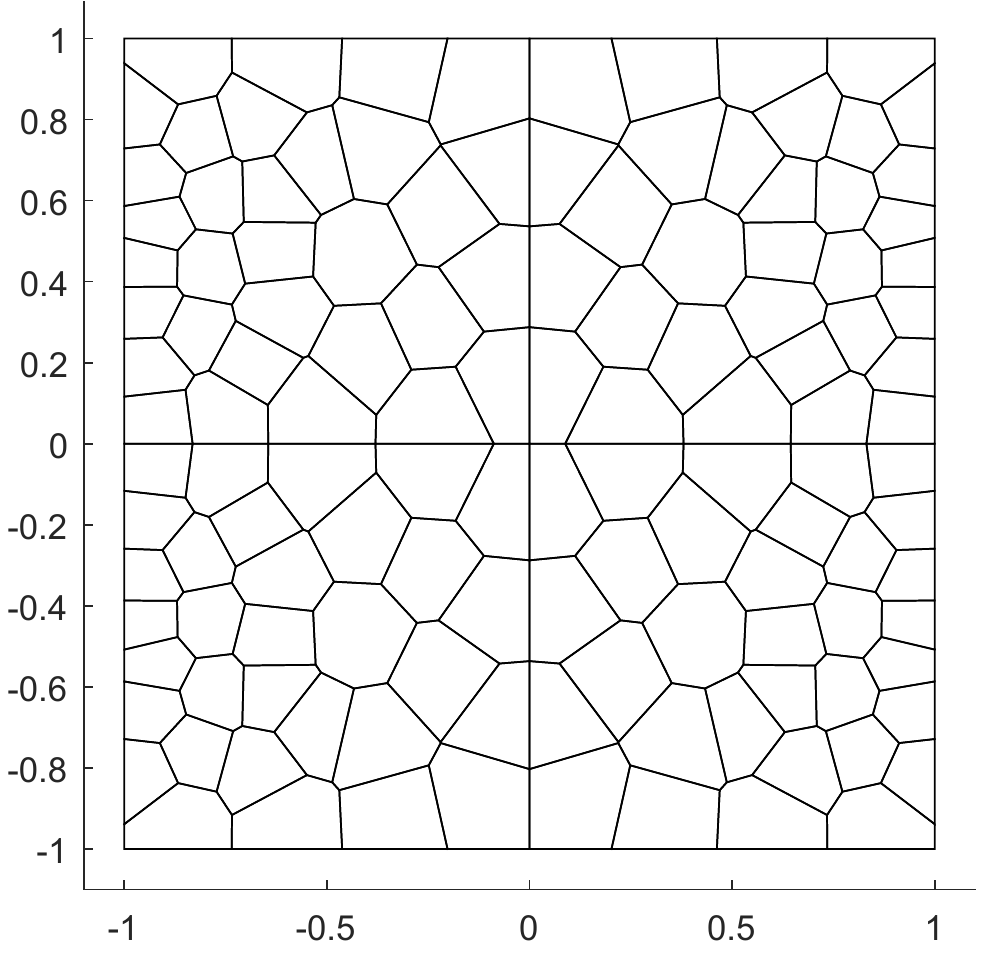}
\end{minipage}
\end{center}
\caption{Voronoi meshes (reflected across the $x$- and the $y$-axes) with 16, 64, and 128 elements, from \textit{left} to \textit{right}.}
\label{fig:voronoi meshes} 
\end{figure}
We observe algebraic convergence in terms of the minimal effective degree $\min\{q_1,q_2\}$.
The rates for the $H^1$ and $L^2$ errors are approximatively given by $\min\{q_1,q_2\}$ and $\min\{q_1,q_2\}+1$, respectively.
Further, when using the Voronoi meshes, the curves are not as straight as in the Cartesian case. This can be explained by the presence of very small edges and of elements with different sizes.

\begin{figure}[h]
\begin{center}
\begin{minipage}{0.48\textwidth} 
\includegraphics[width=\textwidth]{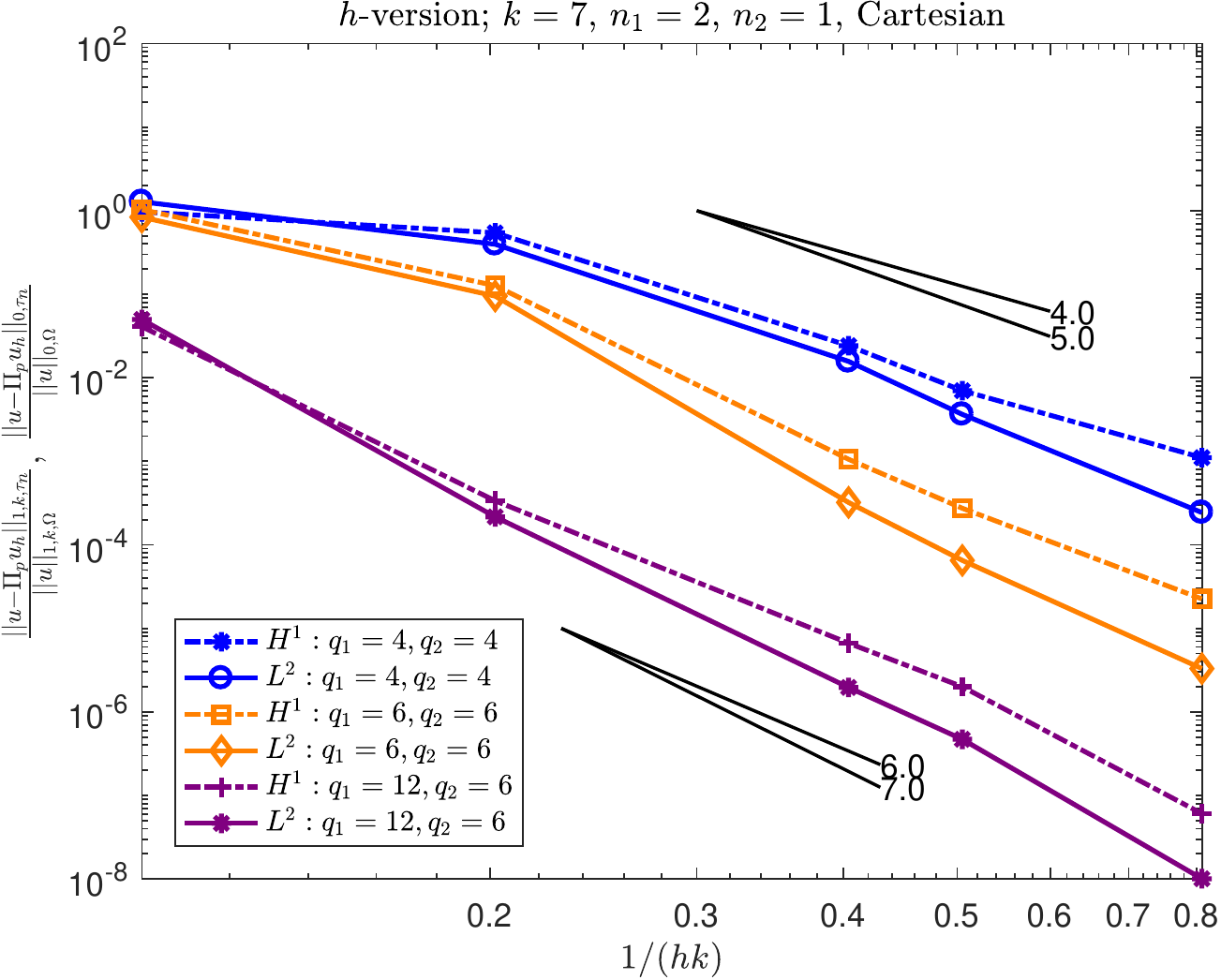}
\end{minipage}
\hfill
\begin{minipage}{0.48\textwidth}
\includegraphics[width=\textwidth]{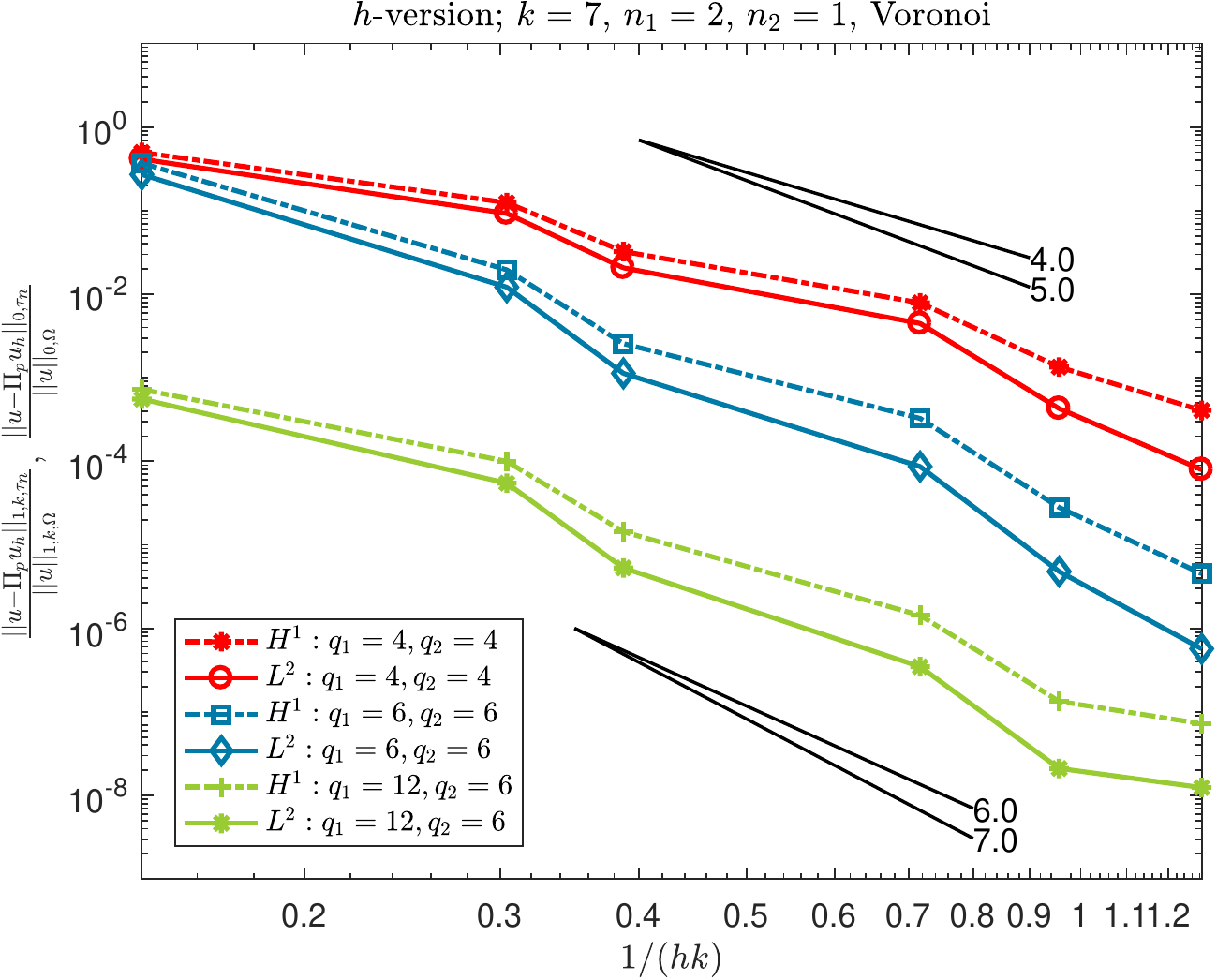}
\end{minipage}
\end{center}
\caption{$h$-version of the method for $u$ in~\eqref{exact solution} with $k=7$, $n_1=2$, $n_2=1$, and $\thetainc=75^\circ$ on a sequence of regular Cartesian meshes (\textit{left}) and a sequence of Voronoi meshes as in Figure~\ref{fig:voronoi meshes} (\textit{right}). The relative errors are computed accordingly with~\eqref{rel_errors}.}
\label{fig:testcase1h}
\end{figure}

Next, we investigate the $\p$-version of the method. To this end, we fix a regular Cartesian mesh and the Voronoi mesh in Figure~\ref{fig:voronoi meshes} with~$64$ elements.
We vary the effective degrees $q_1$ and $q_2$, and study the behaviour for the cases $q_1=q_2$ and $q_1=2q_2$. The error plots are displayed in Figure~\ref{fig:testcase1p}.

\begin{figure}[h]
\begin{center}
\begin{minipage}{0.48\textwidth} 
\includegraphics[width=\textwidth]{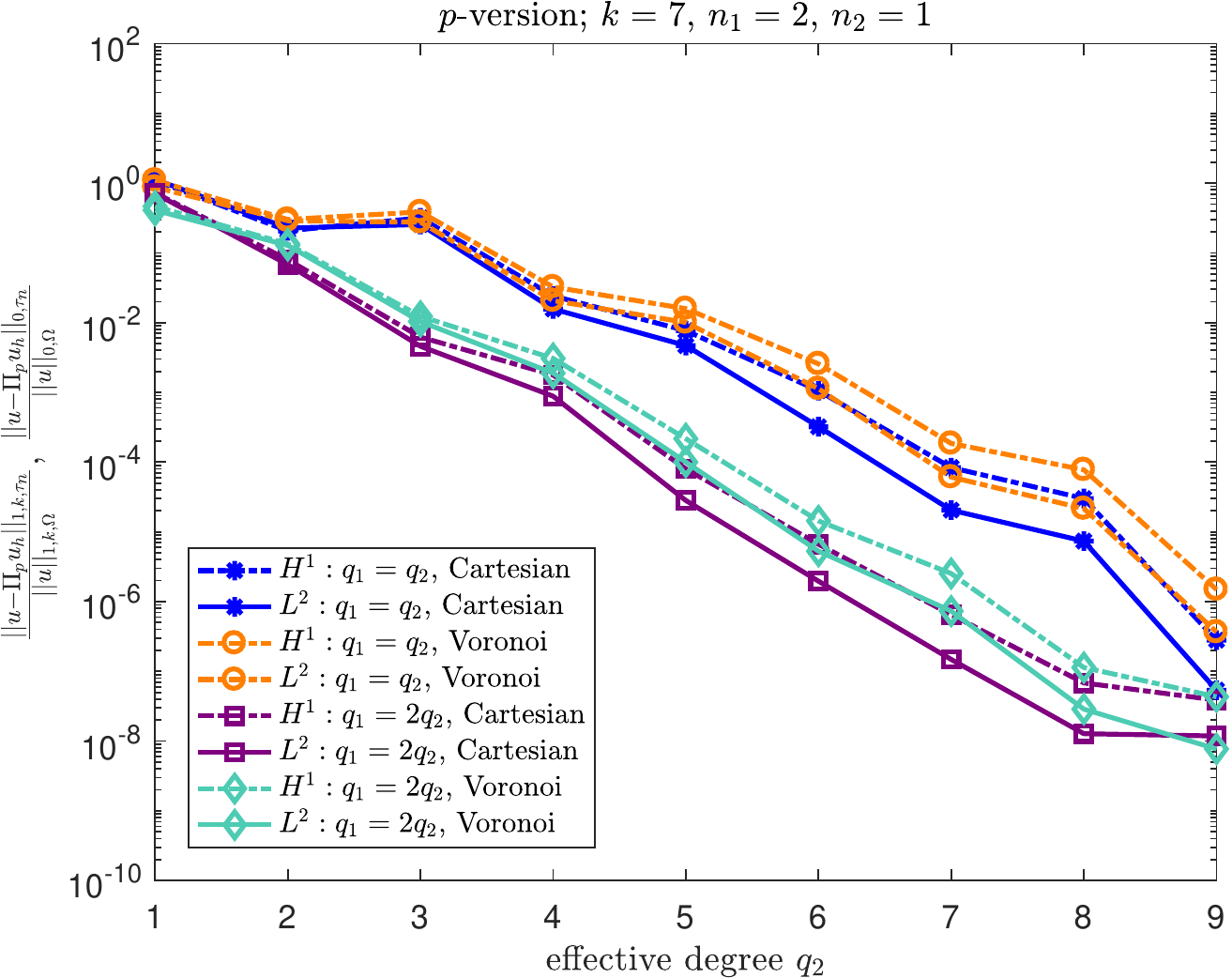}
\end{minipage}
\hfill
\begin{minipage}{0.48\textwidth}
\includegraphics[width=\textwidth]{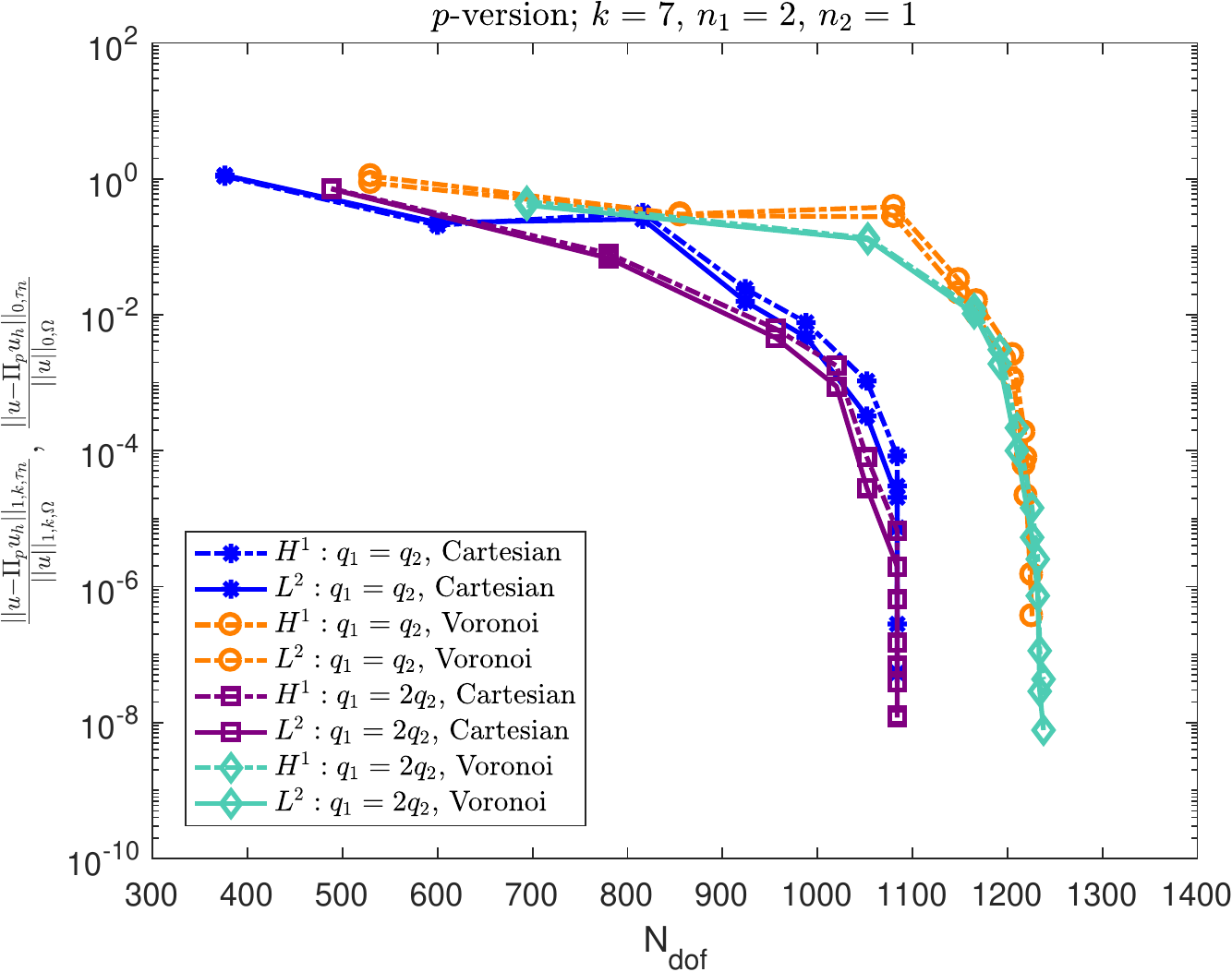}
\end{minipage}
\end{center}
\caption{$p$-version of the method for $u$ in~\eqref{exact solution} with $k=7$, $n_1=2$, $n_2=1$, and $\thetainc=75^\circ$ on a regular Cartesian mesh and the Voronoi mesh in Figure~\ref{fig:voronoi meshes} with 64 elements each. The relative errors are computed accordingly with~\eqref{rel_errors}. \textit{Left}: relative bulk errors against $q_2$. \textit{Right}: relative bulk errors against the number of degrees of freedom.}
\label{fig:testcase1p}
\end{figure}
We observe exponential convergence with respect to the effective degree~$q_2$, where the slope of the error curves is basically the same for $q_1=q_2$ and $q_1=2q_2$, but the accuracy is a few orders higher in the latter case.
The behaviour depicted in Figure~\ref{fig:testcase1p} (right) is instead a consequence of the or\-tho\-go\-na\-li\-za\-tion-and-fil\-te\-ring process in Algorithm~\ref{algorithm orthog process}.
In fact, when increasing~$\p$, the growth of the number of degrees of freedom slows down; this results in a convergence rate which is effectively more than exponential.
Interestingly, in the last $\p$-refinements, the error seems to tend to zero even without an increase of the number of degrees of freedom.

It is worth to underline that the exponential convergence of the $\p$-version is  expected from the fact that we have considered so far meshes that are conforming with respect to the interface~$\Gamma$
and that the exact solution is piecewise analytic on the two subdomains~$\Omega_1$ and~$\Omega_2$.

In Section~\ref{subsubsection nonconforming meshes}, we will investigate the performance of the method employing meshes that are nonconforming with respect to~$\Gamma$.

\subsubsection{\texttt{Test case 2} (incoming plane wave with $\thetainc<\thetacrit$)} \label{subsubsection testcase2}
Here, we fix the incident angle of the incoming wave $\thetainc<\thetacrit$. This leads to total reflection of the plane wave at the interface~$\Gamma$; evanescent modes occur in~$\Omega_2$.
Since the evanescent modes are characterized by an exponential decay, the method could benefit from adding special functions which decay exponentially as well, that is, evanescent waves.
To this purpose, inspired by~\cite{tezaur2008discontinuous, luostari2013improvements}, we compare the method when only plane waves are used in $\Omega_2$ with the case when also evanescent waves in $\Omega_2$ are added. Similarly as above, we investigate the~$\h$- and~$\p$-versions. 

We pick $\k=7$, $n_1=2$ and $n_2=1$, as before, and the incoming angle $\thetainc=50^\circ$. The real part of the corresponding exact solution computed as in~\eqref{exact solution} is plotted in Figure~\ref{fig:exact_sol_testcase2}.

\begin{figure}[h]
\begin{center}
\begin{minipage}{0.48\textwidth} 
\includegraphics[width=\textwidth]{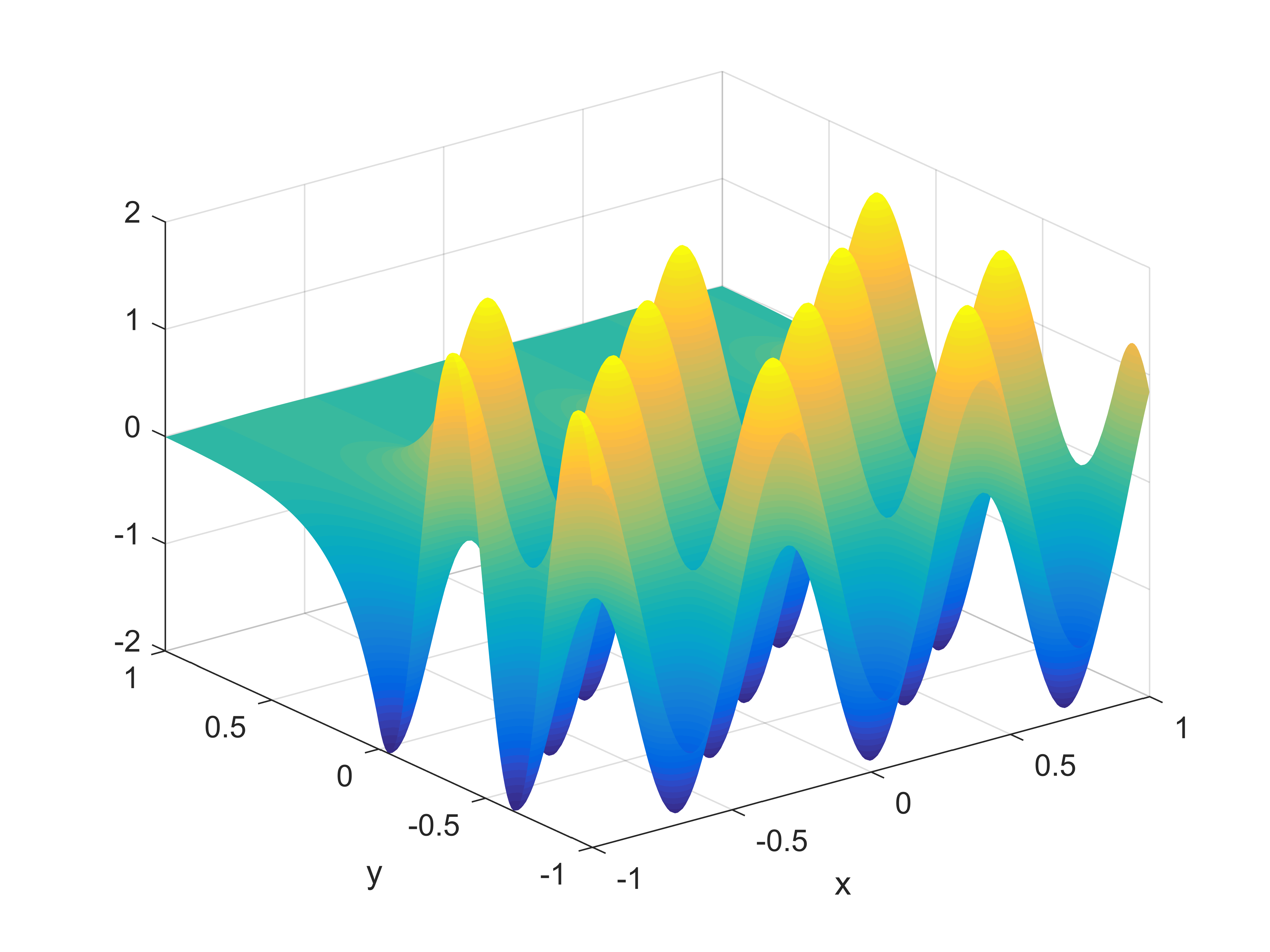}
\end{minipage}
\hfill
\begin{minipage}{0.48\textwidth}
\includegraphics[width=\textwidth]{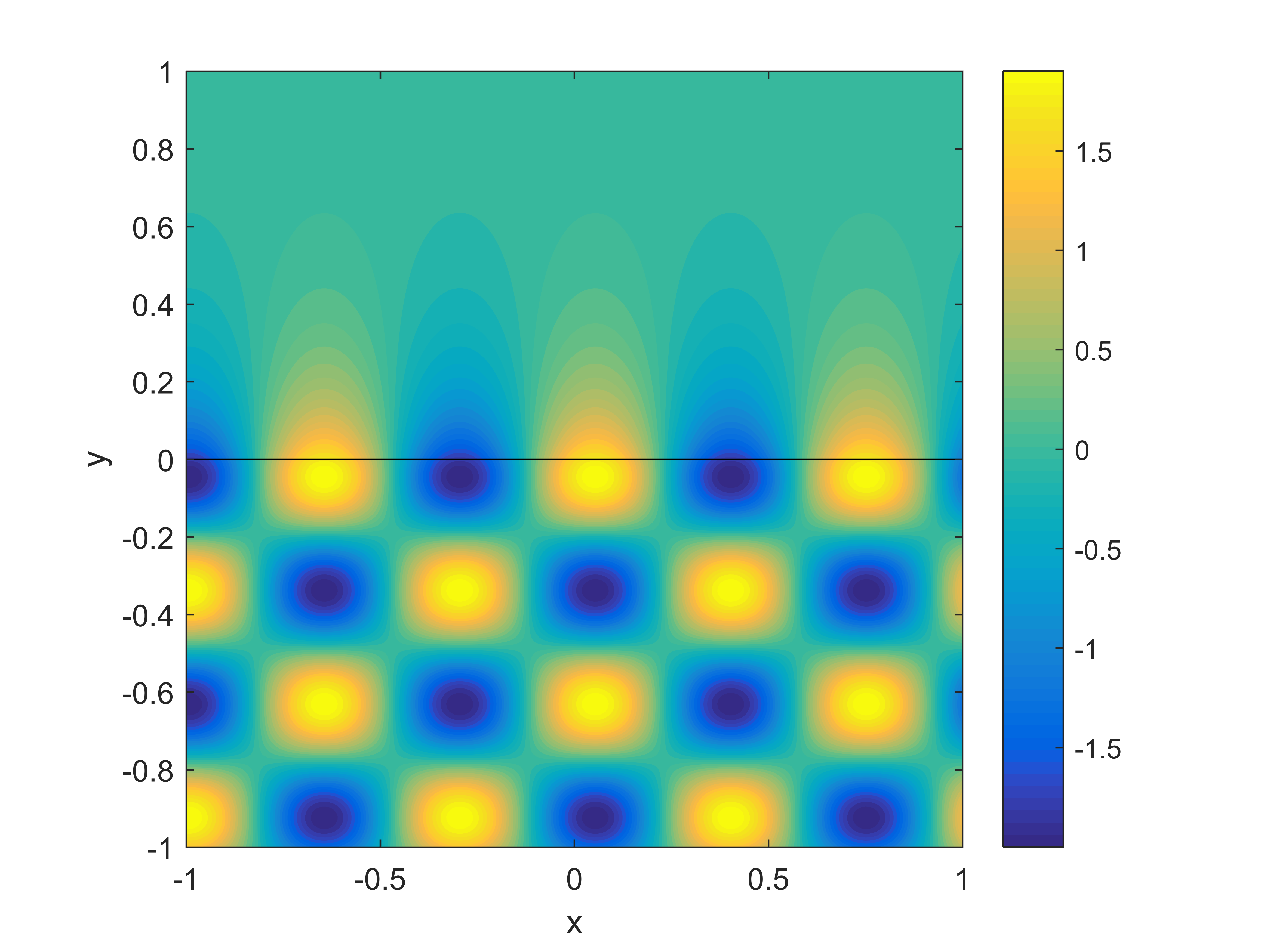}
\end{minipage}
\end{center}
\caption{Real part of the exact solution~$u$ given by~\eqref{exact solution} with $\k=7$, $n_1=2$, $n_2=1$, and $\thetainc=50^\circ$. \textit{Left}: surface plot. \textit{Right}: contour plot, where the black line indicates the interface~$\Gamma$.}
\label{fig:exact_sol_testcase2} 
\end{figure}

For the $h$-version, we assume once again that the effective plane/evanescent wave degree is the same for all elements within a subdomain. In $\Omega_1$, we take $\q_1=12$ (namely, 25 plane waves), whereas in $\Omega_2$ we consider
\begin{itemize}
\item $q_2=6$ and $\widetilde{q}_2=0$, i.e., $13$ plane waves and $0$ evanescent waves;
\item $q_2=5$ and $\widetilde{q}_2=1$, i.e., $11$ plane waves and $2$ evanescent waves;
\item $q_2=4$ and $\widetilde{q}_2=2$, i.e., $9$ plane waves and $4$ evanescent waves;
\item $q_2=0$ and $\widetilde{q}_2=6$, i.e., $0$ plane waves and $12$ evanescent waves.
\end{itemize}

Note that we do not choose $q_1=q_2+\widetilde{q}_2$ on purpose, since in this case the discretization error in~$\Omega_1$ dominates that in~$\Omega_2$ due to the higher local wave number.
For this reason, we picked~$q_1$ equal to the double of~$q_2+\qtilde_2$.

We employ the same meshes as for the $\h$-version in \texttt{test case~1}. The results are plotted in Figure~\ref{fig:testcase2h}.
As already indicated in~\cite[Section 4]{luostari2013improvements}, by adding evanescent waves to the local spaces, the order of convergence of the method is not changed, but the accuracy is improved by a multiplicative factor. 
We also underline that the convergence deteriorates when the error becomes sufficiently small (typically around $10^{-8}$).
This effect can be traced back to the ill-conditioning haunting the wave based methods and which can not be totally removed by Algorithm~\ref{algorithm orthog process}.

\begin{figure}[h]
\begin{center}
\begin{minipage}{0.48\textwidth} 
\includegraphics[width=\textwidth]{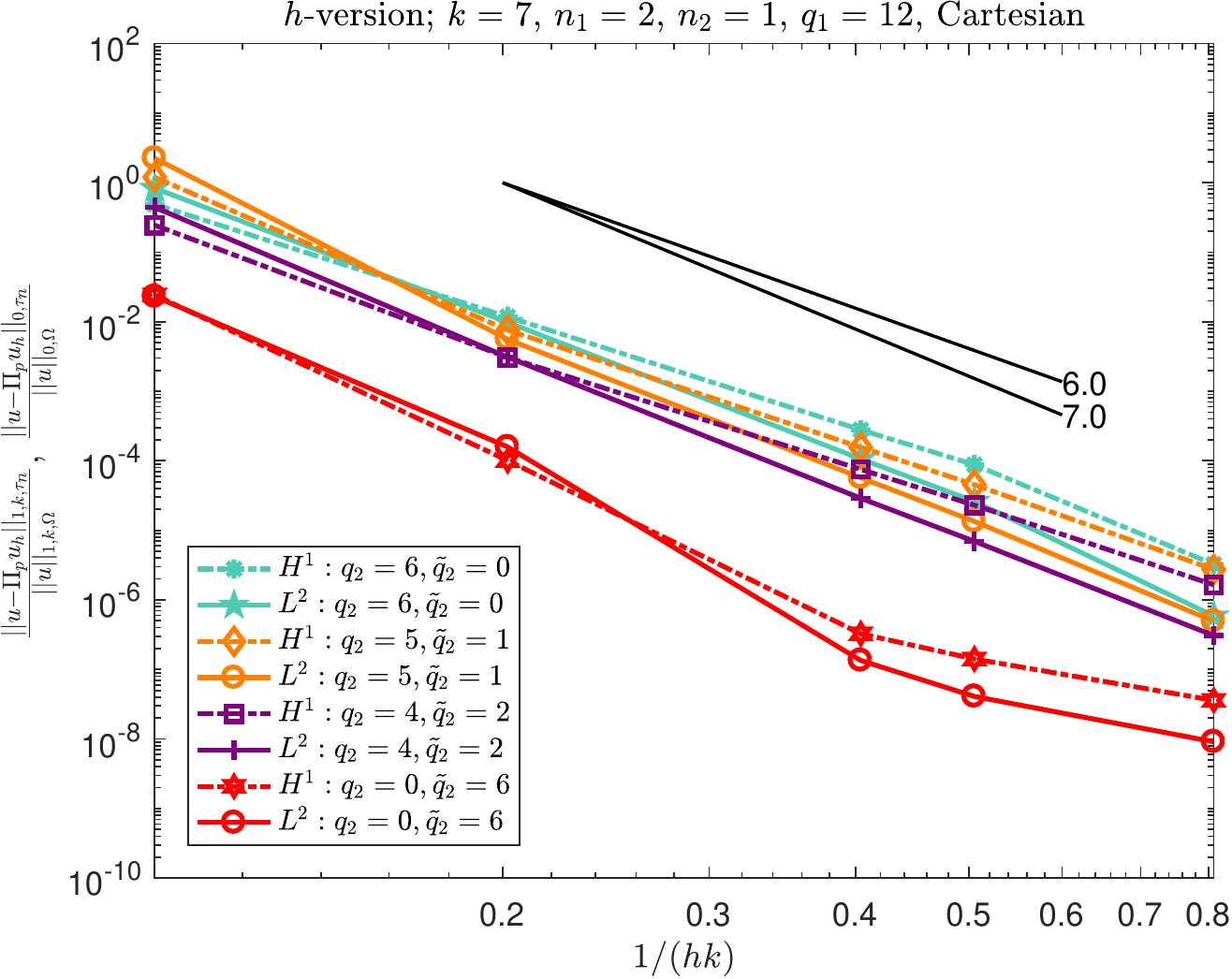}
\end{minipage}
\hfill
\begin{minipage}{0.48\textwidth}
\includegraphics[width=\textwidth]{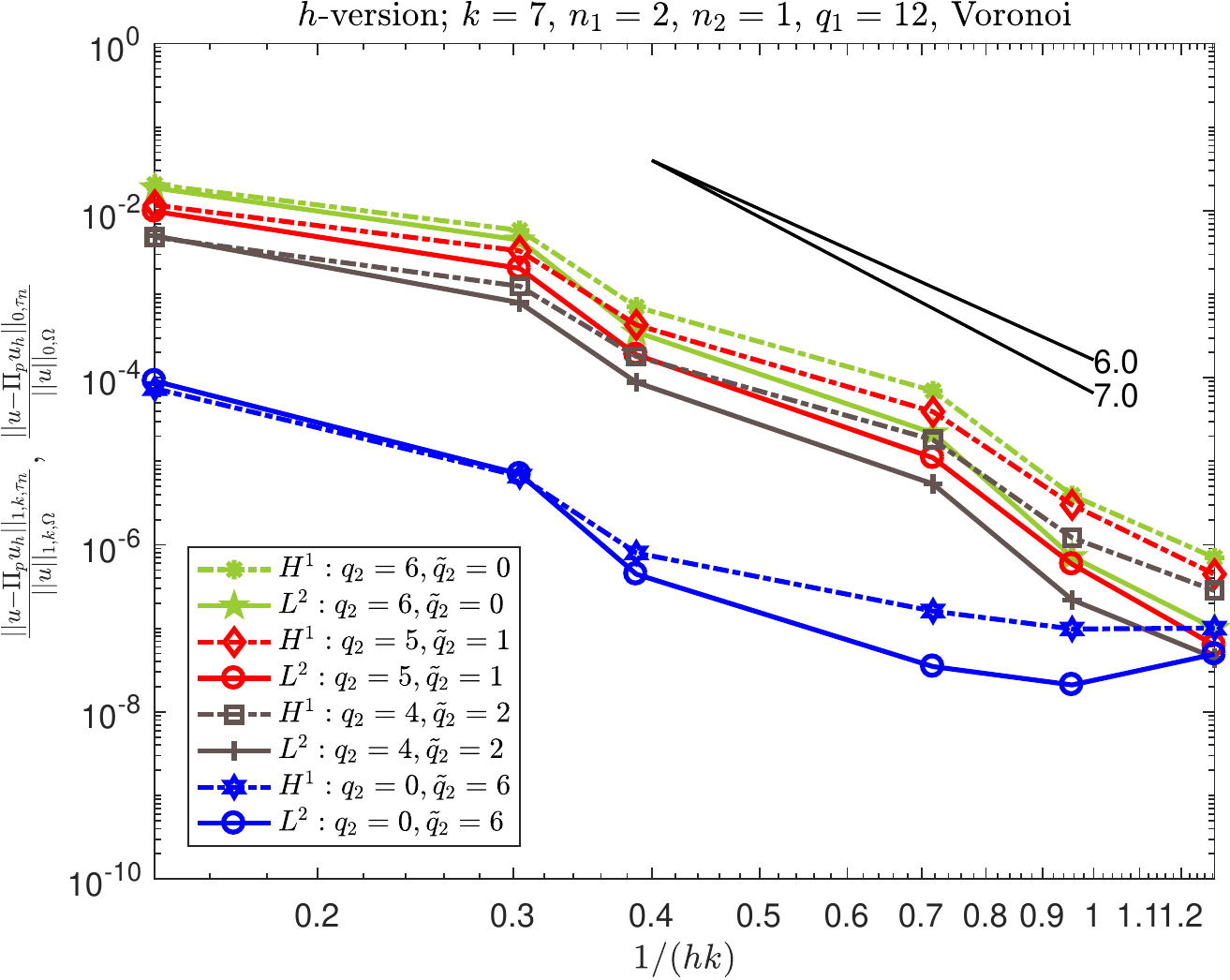}
\end{minipage}
\end{center}
\caption{$h$-version of the method for $u$ in~\eqref{exact solution} with $k=7$, $n_1=2$, $n_2=1$, $q_1=12$, and $\thetainc=50^\circ$ on a sequence of regular Cartesian meshes (\textit{left}) and a sequence of Voronoi meshes as in Figure~\ref{fig:voronoi meshes} (\textit{right}). The relative errors are computed accordingly with~\eqref{rel_errors}.}
\label{fig:testcase2h}
\end{figure}

Regarding the $p$-version, we fix, as before, the Voronoi mesh in Figure~\ref{fig:voronoi meshes} with 64 elements. This time we assume that $q_1=2(q_2+\widetilde{q}_2)$. We consider
\begin{itemize}
\item $\widetilde{q}_2=0$ and increase~$q_2$;
\item $\widetilde{q}_2=1$ and increase~$q_2$;
\item $\widetilde{q}_2=2$ and increase~$q_2$;
\item $q_2=0$ and increase~$\widetilde{q}_2$.
\end{itemize}
The error plots are shown in Figure~\ref{fig:testcase2p}. Similar results are obtained when using a regular Cartesian mesh with 64 elements; for this reason, we omit them.
As before, we observe exponential convergence in terms of the sum of the effective degrees $\q_2+\widetilde{q}_2$, where the accuracy of the method is again improved when evanescent waves are contained in the approximation spaces in $\Omega_2$.
The best performance is achieved when only evanescent waves are used in $\Omega_2$. 

\begin{figure}[h]
\begin{center}
\begin{minipage}{0.48\textwidth} 
\includegraphics[width=\textwidth]{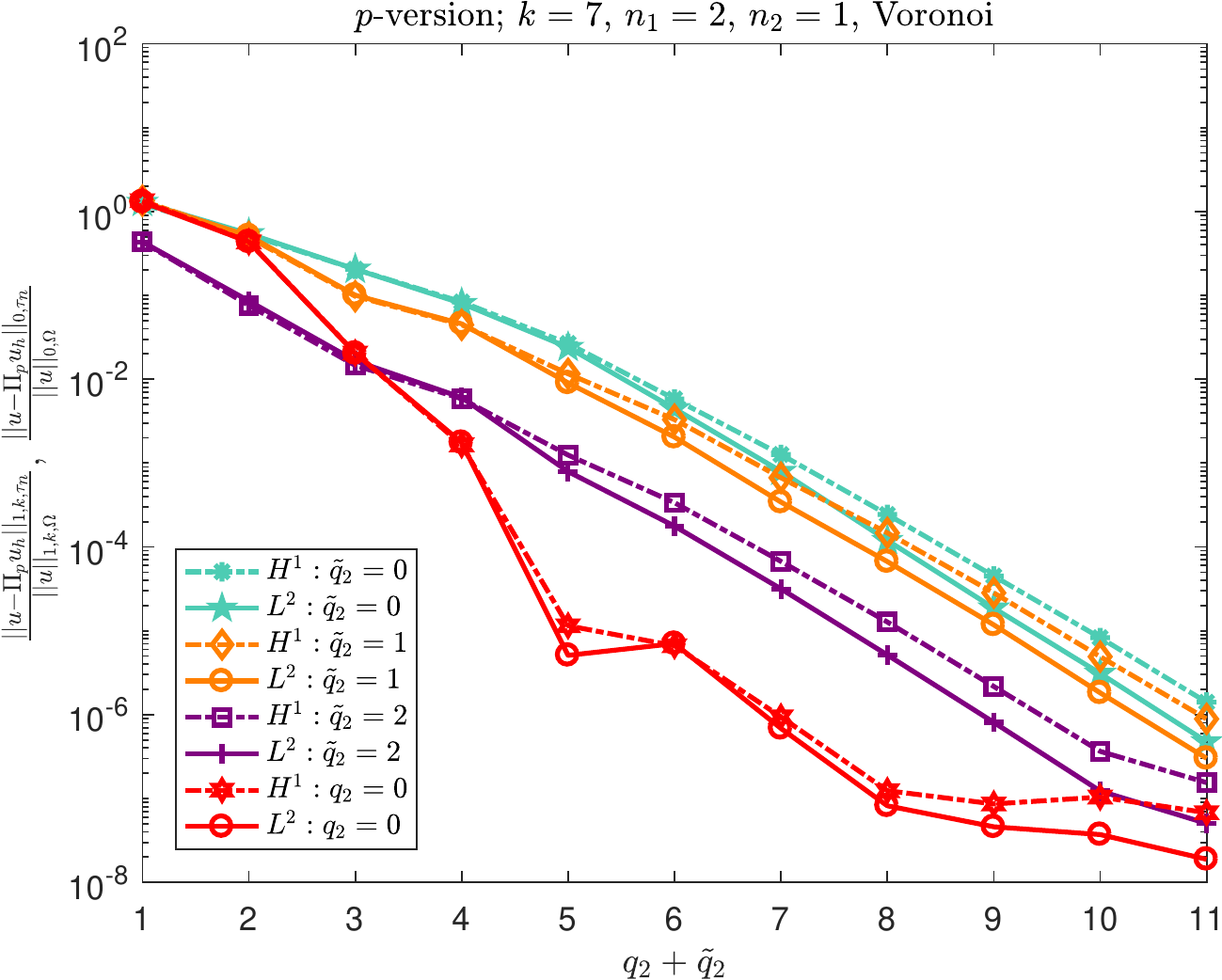}
\end{minipage}
\hfill
\begin{minipage}{0.48\textwidth}
\includegraphics[width=\textwidth]{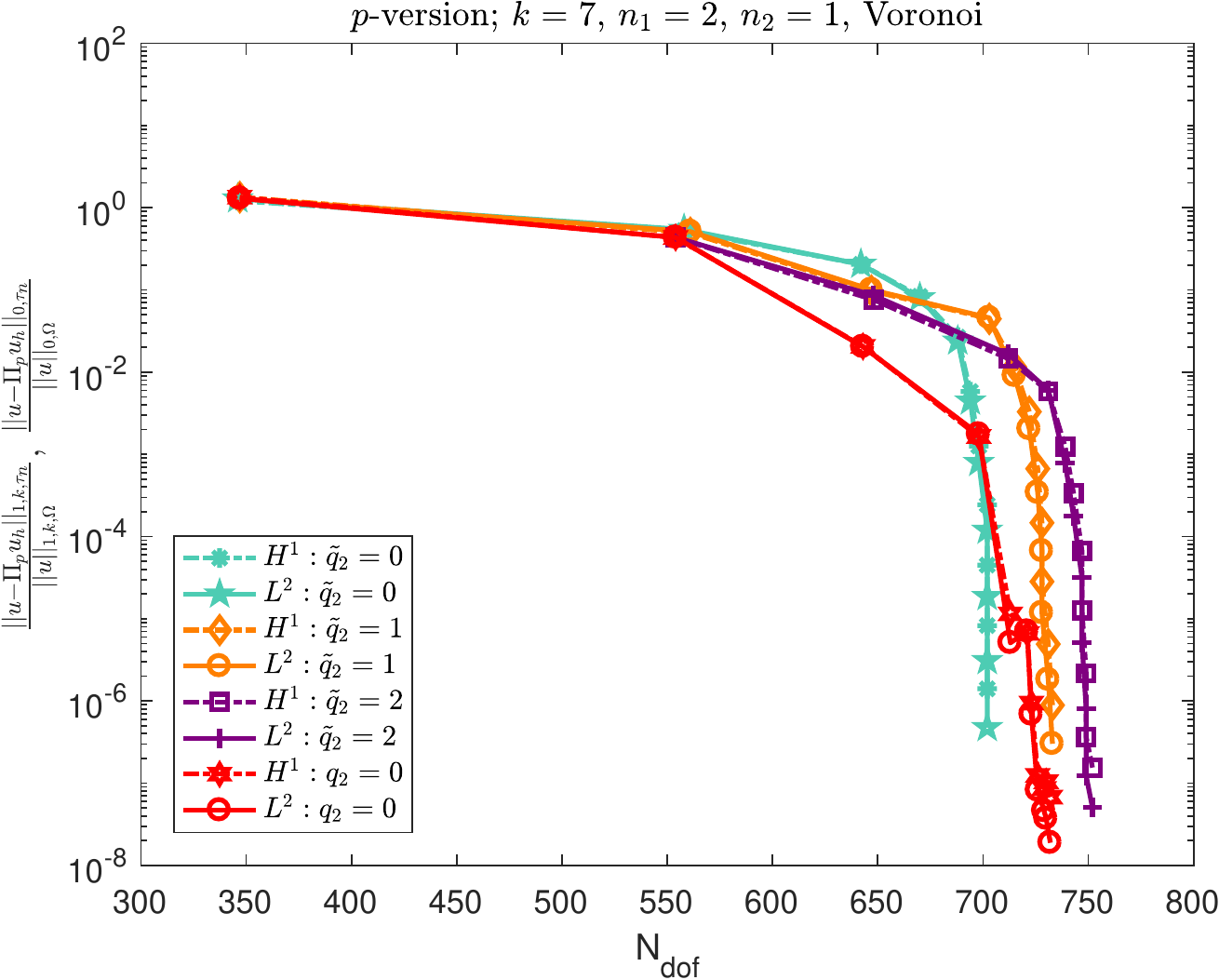}
\end{minipage}
\end{center}
\caption{$p$-version (effective degrees $q_1$, $q_2$ and $\widetilde{q}_2$ with $q_1=2(q_2+\widetilde{q}_2)$) of the method for $u$ in~\eqref{exact solution} with $k=7$, $n_1=2$, $n_2=1$, and $\thetainc=50^\circ$ on the Voronoi mesh with 64 elements in Figure~\ref{fig:voronoi meshes}.
The relative errors are computed accordingly with~\eqref{rel_errors}. \textit{Left}: relative bulk errors against $q_2+\widetilde{q}_2$. \textit{Right}: relative bulk errors against the number of degrees of freedom.}
\label{fig:testcase2p}
\end{figure}

%
%
%
\subsubsection{\texttt{Test case 3} (nonconforming meshes and the $\h\p$-version)} \label{subsubsection nonconforming meshes}
So far, we have employed sequences of meshes that are conforming with respect to the interface~$\Gamma$, that is, every~$\E$ in~$\taun$ is contained either in~$\Omega_1$ or in~$\Omega_2$.
The advantage of this choice is that since the explicit solution~\eqref{exact solution} is piecewise analytic, the~$\h$- and the~$\p$-versions of the method have optimal order of convergence.
In particular, the $\p$-version results in exponential convergence as highlighted in Figures~\ref{fig:testcase1p} and~\ref{fig:testcase2p}.
Such an exponential convergence is however in terms of the number and not in terms of the square root of the number of degrees of freedom. This is due to the Trefftz nature of the method.

We want to investigate now how the method can be tuned to address the case where some elements of the mesh are cut by the interface~$\Gamma$.
This situation can be of interest in the following situations:
\begin{itemize}
\item the interface~$\Gamma$ is curvilinear and one does not want to resort to curvilinear VEM~\cite{beiraorussovacca_curvedVEM}; in this case, some polygonal elements necessarily cut~$\Gamma$;
\item assuming that the parameter~$\kcal$ is subject to uncertainty, e.g. it is piecewise constant over subdomains with stochastic boundaries,
one could proceed by reduced basis techniques starting from a very coarse mesh, and then, perform adaptive mesh and space refinements.
\end{itemize}
The first issue that has to be faced is the definition of the local spaces over the elements~$\E$ in~$\taun$ such that $\E ^{\circ} \cap \Gamma \ne \emptyset$.
Since on such elements, the wave number~$\kcal$ takes two different values, namely~$\k_1$ and~$\k_2$,
we propose to fix the local spaces $\VhE$ defined as in~\eqref{local spaces}, with wave number either given by
the maximum between~$k_1$ and~$\k_2$ (i.e., $\k_1$), or the average of~$k_1$ and~$\k_2$. In both cases, the resulting method~\eqref{VEM} is not Trefftz anymore.

For the forthcoming numerical tests, we focus for simplicity on the exact solution to \texttt{test case~1}, i.e., when the incident angle is larger than the critical angle. Furthermore, we do not employ evanescent waves
and only considers here the case where the average of the wave number is chosen in the elements abutting~$\Gamma$.
Note that, slightly worse results are obtained when taking the maximum between the two wave numbers.
\medskip

Another issue to cope with is that, since the solution is analytic over the two subdomains~$\Omega_1$ and~$\Omega_2$, but not over the complete domain~$\Omega$,
the standard~$\h$- and $\p$-versions of the method may not converge or converge suboptimally when employing nonconforming meshes.

In order to overcome such a problem, we will employ $\h\p$-refinements, that is, we will construct virtual element spaces based on polygonal meshes that are graded geometrically towards the interface~$\Gamma$
and have local effective degrees possibly varying from element to element. In particular, we will resort to both isotropic and anisotropic mesh refinements.
\medskip

The remainder of this section is organized as follows.
In Sections~\ref{paragraph ISO} and~\ref{paragraph ANISO}, we describe the construction of virtual element spaces with elementwise variable effective degree
on geometrically graded meshes employing isotropic and anisotropic mesh refinements, respectively.
Next, in Section~\ref{paragraph h VS hpISO}, we present numerical experiments, where we compare the~$\h$- and the $\h\p$-versions (with isotropic mesh refinements) of the method.
Finally, a comparison between $\h\p$-isotropic and anisotropic mesh refinements is discussed in Section~\ref{paragraph ISO vs ANISO}.

\paragraph{$\h\p$-virtual element spaces on isotropic geometrically refined meshes.} \label{paragraph ISO}
The scope of the present section is to introduce geometric isotropic mesh refinements towards the interface~$\Gamma$ and the associated $\h\p$-virtual element spaces.

First, we define the concept of layers of a mesh~$\taun$. To this purpose, we assume that a mesh $\taun$ consists of~$n+1$ layers.
The $0$-th layer $L_n^0$ is the set of all polygons abutting the interface~$\Gamma$, whereas the other layers are defined by induction as
\[
L^n_\ell := \left\{ \E_1 \in \taun \mid \overline {\E_1} \cap \overline {\E_2}\ne \emptyset \text{ for some }\E_2\in L^n_{\ell-1},\, \E_1 \not \subseteq \cup_{j=0}^{\ell-1} L_j^n   \right\} \quad \forall \ell =1, \dots, n.
\]
We say that $\{\taun\}_{n}$ is a sequence of isotropic geometrically graded meshes $(i)$ if~$\mathcal T_{n+1}$ is obtained by starting from~$\taun$ and refining only the elements in the layer~$L^n_0$,
and $(ii)$ if there exists a grading parameter~$\sigma \in (0,1)$ such that
\begin{equation} \label{sigma ISO}
\hE \approx \sigma^{n-\ell} \quad \text{if } \E \in L_\ell^n.
\end{equation}
In words, such isotropic geometrically graded meshes are characterized by small elements abutting the interface and elements enlarging geometrically when the distance from~$\Gamma$ increases.
We assume that all the elements have bounded aspect ratio.

Next, we define $\h\p$-virtual element spaces over such meshes and we introduce two types of distributions of the effective degrees.
To this end, we first define the dimension of plane and evanescent waves over edges;
denoting by~$\pbold\in \mathbb N^{\card(\taun)}$ the vector of the local effective degrees, the vector~$\pboldepsilon\in \mathbb N ^{\card(\En)}$,
i.e., the vector of the dimensions of the spaces $\PWtilde(\e)$ in~\eqref{edge plane-evanescent waves}, is given by
\[
(\pboldepsilon)_{|\ell}=
\begin{cases}
\max(\pbold_i,\pbold_j) 	& \text{if } \e_\ell \in \En^I \text{ and } \e_\ell\subseteq \partial \E_i \cap \partial \E_j \\
\pbold_i 				& \text{if } \e_\ell \in \En^B \text{ and } \e_\ell \subset \partial \E_i \\
\end{cases}
\quad \forall \ell = 1,\dots, \card(\En).
\]
In the numerical experiments, we will employ, for some positive parameter~$\mu$, both a uniform (increasing) distribution of the degrees of freedom
\begin{equation} \label{uniform p}
\pbold_j = \lceil \mu(n+1) \rceil \quad \forall j=1,\dots, \card(\taun),
\end{equation}
as well as a graded one:
\begin{equation} \label{hp p}
\pbold_j = \lceil  \mu (\ell+1)  \rceil \text{ if } \E_j \in L^n_\ell \quad \forall j=1,\dots,\card (\taun).
\end{equation}
In~\eqref{uniform p} and~\eqref{hp p}, $\lceil \cdot \rceil$ denotes the ceiling function.
The latter approach is based on effective degrees growing together with the layer index. In fact, the singularity is approximated with the aid of small elements, whereas, the analytic part is approximated on large elements with high effective degrees.

In Figure~\ref{fig: hp mesh 1}, we depict  the first two meshes~$\mathcal T_1$ and~$\mathcal T_2$ (including the graded distribution~\eqref{hp p} of the effective degrees with~$\mu=1$)
of a sequence of isotropic geometrically graded meshes with grading parameter~$\sigma$ in~\eqref{sigma ISO} given by $1 / 3$.
\begin{figure}[H]
\centering
\begin{minipage}{0.30\textwidth}
\begin{center}
\begin{tikzpicture}[scale=2.5]
\draw[black, very thick, -] (0,0) -- (2,0) -- (2,2) -- (0, 2) -- (0,0);
\draw[red, dashed, thick, -] (0,1) -- (2,1);
\draw[black, very thick, -] (0, 4/3) -- (2, 4/3); \draw[black, very thick, -] (0, 2/3) -- (2, 2/3); \draw[black, very thick, -] (1, 2/3) -- (1, 4/3);
\draw(1, 2-4/13) node[black] {2}; \draw(1, 4/13) node[black] {2};
\draw(1/2, 1) node[black] {1}; \draw(3/2, 1) node[black] {1};
\draw(2-1/9, 1+1/20) node[red] {\tiny{$\Gamma$}};
\end{tikzpicture}
\end{center}
\end{minipage}
\quad \quad\quad \quad\quad \quad\quad \quad
\begin{minipage}{0.30\textwidth}
\begin{center}
\begin{tikzpicture}[scale=2.5]
\draw[black, very thick, -] (0,0) -- (2,0) -- (2,2) -- (0, 2) -- (0,0);
\draw[red, dashed, thick, -] (0,1) -- (2,1);
\draw[black, very thick, -] (0, 4/3) -- (2, 4/3); \draw[black, very thick, -] (0, 2/3) -- (2, 2/3); \draw[black, very thick, -] (1, 2/3) -- (1, 4/3);
\draw[black, very thick, -] (0, 1+1/9) -- (2, 1+1/9); \draw[black, very thick, -] (0, 1-1/9) -- (2, 1-1/9); \draw[black, very thick, -] (1/2, 1+1/9) -- (1/2, 1-1/9); \draw[black, very thick, -] (3/2, 1+1/9) -- (3/2, 1-1/9);
\draw(1, 2-4/13) node[black] {3}; \draw(1, 4/13) node[black] {3};
\draw(1/2, 3/4) node[black] {2}; \draw(3/2, 3/4) node[black] {2};
\draw(1/2, 5/4) node[black] {2}; \draw(3/2, 5/4) node[black] {2};
\draw(1/4, 1) node[black] {1}; \draw(3/4, 1) node[black] {1};
\draw(5/4, 1) node[black] {1}; \draw(7/4, 1) node[black] {1};
\draw(2-1/9, 1+1/20) node[red] {\tiny{$\Gamma$}};
\end{tikzpicture}
\end{center}
\end{minipage}
\caption{First two meshes~$\mathcal T_1$ and~$\mathcal T_2$ (including the graded distribution~\eqref{hp p} of the effective degrees, with~$\mu=1$) of a sequence of isotropic geometrically graded meshes. The grading parameter~$\sigma$ in~\eqref{sigma ISO} is~$1 / 3$.
The \red{dashed red} line denotes the interface~$\Gamma$.}
\label{fig: hp mesh 1}
\end{figure}
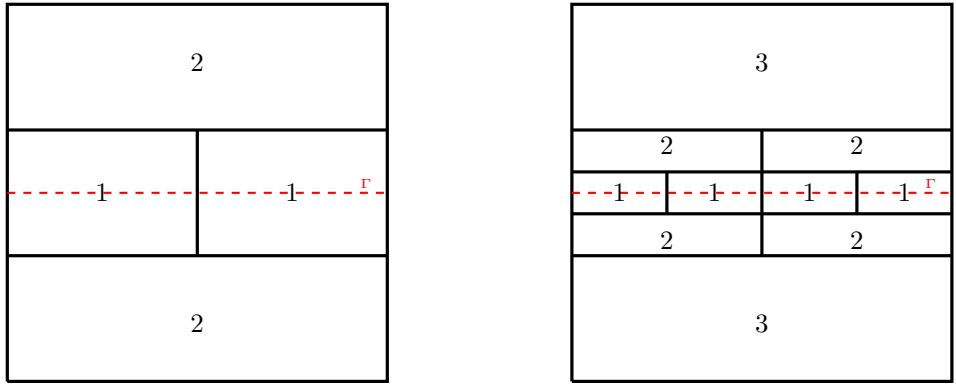

\paragraph{$\h\p$-virtual element spaces on anisotropic geometrically refined meshes.} \label{paragraph ANISO}
The scope of the present section is to describe anisotropic geometric mesh refinements towards the interface~$\Gamma$ and the associated $\h\p$-virtual element spaces.

The concept of layers of~$\taun$ is the same as in Section~\ref{paragraph ISO} and is therefore omitted here.
The difference from isotropic geometric mesh refinements is that,
given~$\E\in \taun$, and~$\h_{\E,1}$ and~$\h_{\E,2}$ the lengths of the edges of the rectangle of minimal perimeter bounding~$\E$ with edges parallel to~$\Gamma$ and its normal direction, respectively,
we say that $\{\taun\}_n$ is a sequence of anisotropic geometric mesh refinements if~$\mathcal T_{n+1}$ is obtained starting from~$\taun$ and refining only the elements in the layer~$L^n_0$, and if there exists a grading parameter~$\sigma\in (0,1)$ such that
\begin{equation} \label{sigma ANISO}
\h_{\E,2} \approx \sigma^{n-\ell} \quad \text{if } \E \in L_\ell^n, \quad \quad \h_{\E,1} \approx 1 \quad \quad \forall \E \in \taun.
\end{equation}
In words, we consider very thin elements in proximity of the interface~$\Gamma$ and larger elements elsewhere.

The reason why we also employ anisotropic mesh refinements is that the solution is singular only in the normal direction to~$\Gamma$ and not along the tangential one.
Thus, roughly speaking, it suffices to refine the mesh along the normal direction to~$\Gamma$.
Numerically, this results in a more effective approach for approximating edge singularities.
In fact, in the finite element framework, one gets exponential convergence in terms of the cubic root of the degrees of freedom (in the Trefftz setting, the cubic root becomes the square root,
see e.g. \cite{hmps_harmonicpolynomialsapproximationandTrefftzhpdgFEM, conformingHarmonicVEM, ncHVEM, TVEM_Helmholtz_num}),
whereas, with isotropic mesh refinements, one only obtains an algebraic rate of convergence.

Note that, for anisotropic meshes, we only employ the uniform distribution of the degrees of freedom~\eqref{uniform p}. The graded approach~\eqref{hp p} would not suffice for approximating the tangential part of the solution
(here, the elements have too long edges and therefore the method would not converge properly with very few degrees of freedom).

In Figure~\ref{fig: hp mesh 2}, we depict  the first two meshes~$\mathcal T_1$ and~$\mathcal T_2$ (including the uniform distribution of the effective degrees~\eqref{uniform p})
of a sequence of anisotropic geometrically graded meshes with grading parameter~$\sigma$ in~\eqref{sigma ANISO} given by $1 / 3$.
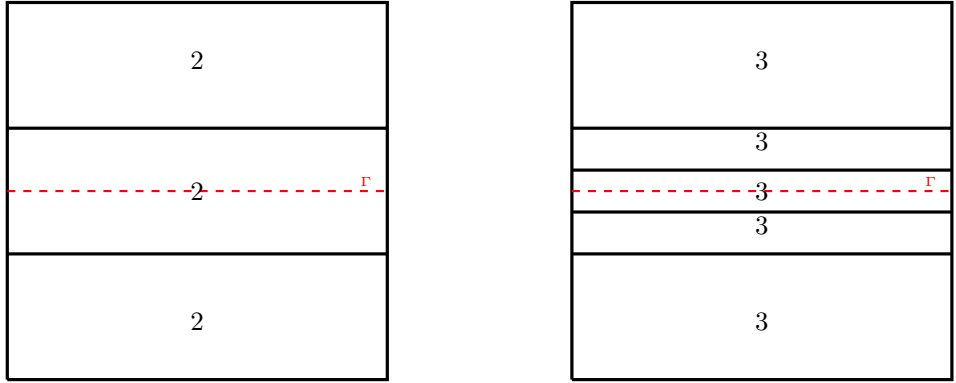
\begin{figure}[H]
\centering
\begin{minipage}{0.30\textwidth}
\begin{center}
\begin{tikzpicture}[scale=2.5]
\draw[black, very thick, -] (0,0) -- (2,0) -- (2,2) -- (0, 2) -- (0,0);
\draw[red, dashed, thick, -] (0,1) -- (2,1);
\draw[black, very thick, -] (0, 4/3) -- (2, 4/3); \draw[black, very thick, -] (0, 2/3) -- (2, 2/3); 
\draw(1, 2-4/13) node[black] {2}; \draw(1, 4/13) node[black] {2}; \draw(1, 1) node[black] {2};
\draw(2-1/9, 1+1/20) node[red] {\tiny{$\Gamma$}};
\end{tikzpicture}
\end{center}
\end{minipage}
\quad \quad\quad \quad\quad \quad\quad \quad
\begin{minipage}{0.30\textwidth}
\begin{center}
\begin{tikzpicture}[scale=2.5]
\draw[black, very thick, -] (0,0) -- (2,0) -- (2,2) -- (0, 2) -- (0,0);
\draw[red, dashed, thick, -] (0,1) -- (2,1);
\draw[black, very thick, -] (0, 4/3) -- (2, 4/3); \draw[black, very thick, -] (0, 2/3) -- (2, 2/3); 
\draw[black, very thick, -] (0, 4/3-2/9) -- (2, 4/3-2/9); \draw[black, very thick, -] (0, 2/3+2/9) -- (2, 2/3+2/9); 
\draw(1, 2-4/13) node[black] {3}; \draw(1, 4/13) node[black] {3}; \draw(1 , 4/3-1/14) node[black] {3}; \draw(1 , 4/3 - 4/9 -1/14) node[black] {3}; \draw(1 , 1) node[black] {3};
\draw(2-1/9, 1+1/20) node[red] {\tiny{$\Gamma$}};
\end{tikzpicture}
\end{center}
\end{minipage}
\caption{First two meshes~$\mathcal T_1$ and~$\mathcal T_2$ (including the uniform distribution of the effective degrees~\eqref{uniform p}) of a sequence of anisotropic geometrically graded meshes. The grading parameter~$\sigma$ in~\eqref{sigma ISO} is~$1 / 3$.
The \red{dashed red} line denotes the interface~$\Gamma$.}
\label{fig: hp mesh 2}
\end{figure}

\paragraph{Nonconforming meshes: comparison of the~$\h$- and the $\h\p$-isotropic versions.} \label{paragraph h VS hpISO}
In this section, we compare the $\h$-version of the method on sequences of uniform Cartesian meshes that are nonconforming with respect to the interface~$\Gamma$ employing~$\p=15$ plane wave directions,
and the $\h\p$-version of the method with isotropic geometrically graded mesh  as in Figure~\ref{fig: hp mesh 2}, endowed with both the uniform and the graded distribution of the effective degrees in~\eqref{uniform p} and~\eqref{hp p}, respectively.
In both cases, we pick~$\mu=1$, $2$, and $3$.

The results are displayed in Figure~\ref{fig: h vs hp ISO}, where we compare the number of degrees of freedom with the computable relative~$H^1$ and~$L^2$ errors in~\eqref{rel_errors}.

\begin{figure}[h]
\begin{center}
\begin{minipage}{0.7\textwidth} 
\includegraphics[width=\textwidth]{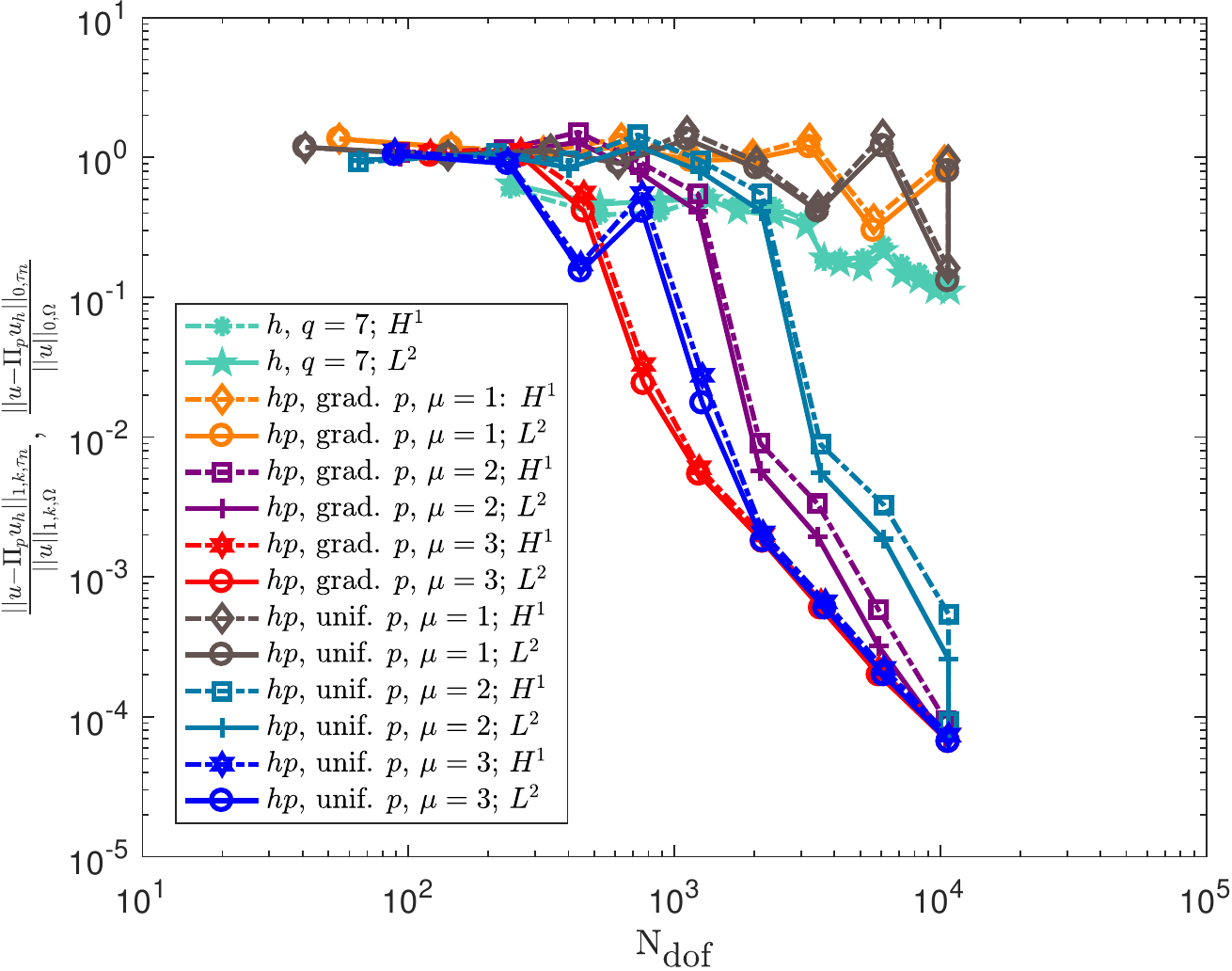}
\end{minipage}
\end{center}
\caption{$\h$-version employing nonconforming Cartesian meshes and $\h\p$-version with isotropic geometrically graded meshes, with grading parameter~$\sigma$ in~\eqref{sigma ISO} equal to~$1/3$, and~$\p=15$ plane waves on every element.
For the $\h\p$-spaces we consider both the uniform distribution of the degrees of freedom~\eqref{uniform p} and the graded one~\eqref{hp p}, with $\mu=1$, $2$, and~$3$.
The computable relative~$H^1$ and~$L^2$ errors in~\eqref{rel_errors} are plotted against the number of degrees of freedom.}
\label{fig: h vs hp ISO}
\end{figure}
From Figure~\ref{fig: h vs hp ISO}, we deduce that the $\h$-version converges poorly, due to the low Sobolev regularity of the solution.
The $\h\p$-version, on the other hand, performs much better. In particular, the choice of employing a graded distribution of the degrees of freedom seems to be the most effective.
It has to be underlined that in order to achieve the convergence regime, the parameter~$\mu$ in~\eqref{uniform p} and~\eqref{hp p} has to be picked sufficiently large, e.g.~$\mu=2$.

\paragraph{Nonconforming meshes: comparison of the $\h\p$-isotropic and anisotropic versions.} \label{paragraph ISO vs ANISO}
In this section, we compare the behaviour of the method for the case of $\h\p$-isotropic and anisotropic mesh refinements, using the meshes depicted in Figures~\ref{fig: hp mesh 1} and~\ref{fig: hp mesh 2}, respectively.
In particular, whereas in the isotropic case we only use the graded distribution~\eqref{hp p} (since we know from Section~\ref{paragraph h VS hpISO} that the uniform distribution~\eqref{uniform p} works slightly worse),
in the anisotropic case we employ a uniform distribution of the effective degrees~\eqref{uniform p}.
In both cases, we employ~$\mu=2$ and~$3$.

The results are presented in Figure~\ref{fig: hp ISO vs hp ANISO}, where we compare the computable relative~$H^1$ and~$L^2$ errors in~\eqref{rel_errors} in terms of the square root of the number of degrees of freedom.

\begin{figure}[h]
\begin{center}
\begin{minipage}{0.7\textwidth} 
\includegraphics[width=\textwidth]{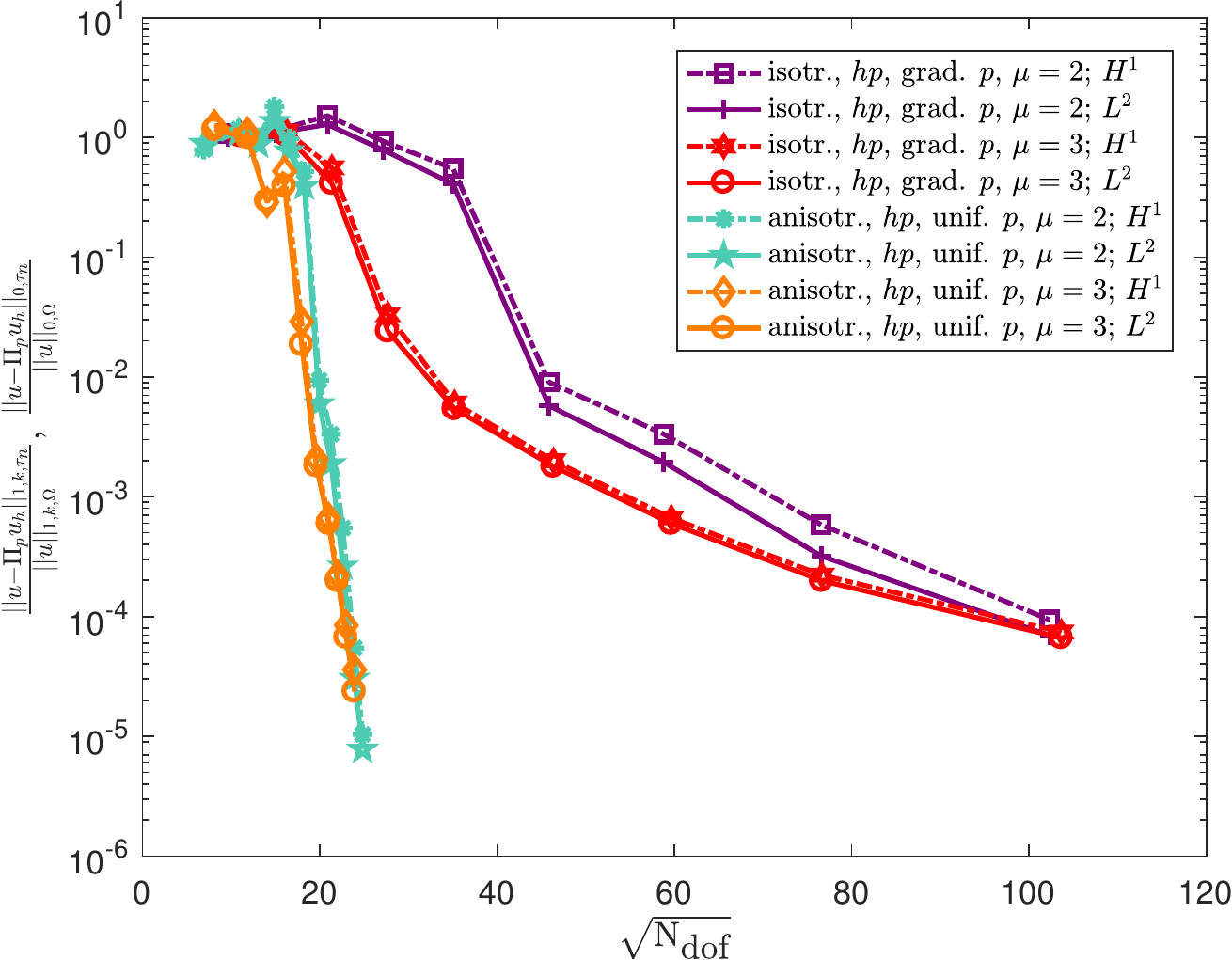}
\end{minipage}
\end{center}
\caption{$\h\p$-versions with geometrically isotropic and anisotropic graded meshes, with grading parameter~$\sigma$ in~\eqref{sigma ISO} equal to~$1/3$.
In the former case, we consider the graded distribution of the effective degrees~\eqref{hp p}, with $\mu=2$ and~$3$, whereas, in the latter, the uniform one~\eqref{uniform p} is applied.
We plot the computable relative~$H^1$ and~$L^2$ errors in~\eqref{rel_errors} against the square root of the degrees of freedom.}
\label{fig: hp ISO vs hp ANISO}
\end{figure}
From Figure~\ref{fig: hp ISO vs hp ANISO}, it is clear that employing anisotropic meshes leads to much better results.
Whilst exponential convergence in terms of the square root of the number of degrees of freedom is obtained for anisotropic meshes, the rate of convergence is only algebraic in the case of isotropic meshes.

\medskip
So far, we have employed the average of the two wave numbers as an ``artificial'' wave number on the elements abutting the interface~$\Gamma$.
In Figure~\ref{fig: different wn}, we present some numerical results for the $\h\p$-version of the method when also taking the maximum between the two of them.
We consider anisotropic mesh refinements and the uniform distribution of the effective degrees~\eqref{uniform p}, with~$\mu=2$ and~$3$.

\begin{figure}[h]
\begin{center}
\begin{minipage}{0.7\textwidth} 
\includegraphics[width=\textwidth]{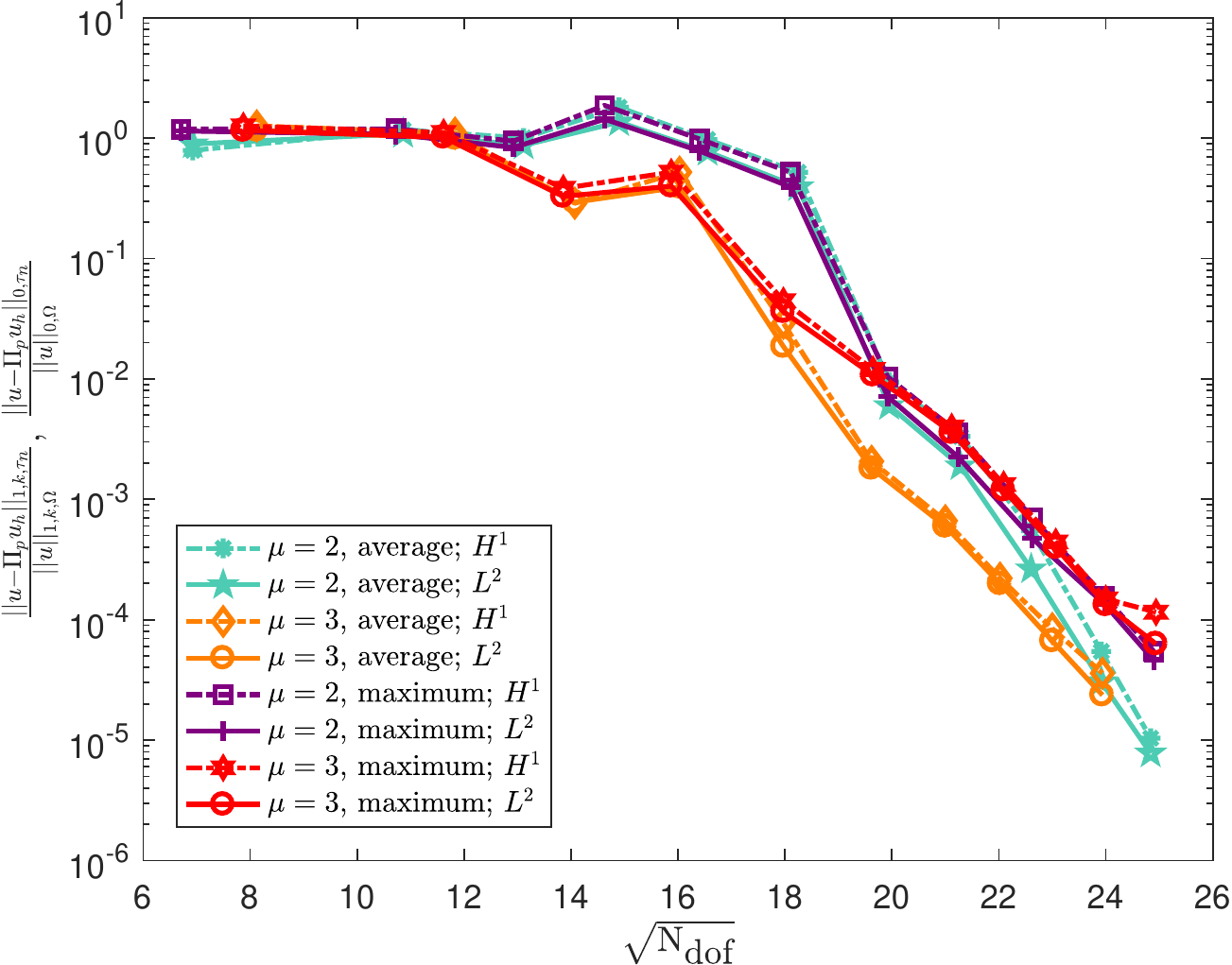}
\end{minipage}
\end{center}
\caption{$\h\p$-version with anisotropic geometrically graded meshes, with grading parameter~$\sigma$ in~\eqref{sigma ISO} equal to~$1/3$.
We consider the uniform distribution of the effective degrees~\eqref{uniform p} and compare the effects of the choice of the ``artificial'' wave number on the elements abutting the interface~$\Gamma$; in particular, we pick the average and the maximum of the two wave numbers.
On the $x$-axis, we plot the number of degrees of freedom; on the $y$-axis, we plot the computable relative~$H^1$ and $L^2$ errors in~\eqref{rel_errors}.}
\label{fig: different wn}
\end{figure}

From Figure~\ref{fig: different wn}, we deduce that the choice for the ``artificial'' wave number is not particularly influencing the method, although the performance, when picking the average, seems to be slightly better.

\section{Conclusions} \label{section conclusions}
We have extended the nonconforming Trefftz virtual element method of~\cite{ncTVEM_Helmholtz, TVEM_Helmholtz_num} for the approximation of solutions to Helmholtz boundary value problems
to the case of piecewise constant wave numbers, modelling fluid-fluid interface problems.
Moreover, we discussed the enrichment of the local approximation spaces with special functions, capturing the physical behaviour of the solution to the target problem.

Owing to the nonconforming setting of the method, and more precisely to the edgewise definition of the basis functions, this can be done in a natural fashion by simply supplementing the edge spaces with the corresponding traces of the functions.
Although this procedure results in a large number of degrees of freedom, an or\-tho\-go\-na\-li\-za\-tion-and-fil\-te\-ring process as introduced in~\cite{TVEM_Helmholtz_num}
can be applied to eliminate ``plonastic'' basis functions and mitigate the strong ill-conditioning, eventually leading to an extremely robust performance of the method.

This is verified in a number of numerical experiments, including investigations on~$\h$-, $\p$-, and $\h\p$-refinements.
In particular, whereas the~$\h$- and the $\p$-versions of the method converge optimally when employing meshes which are conforming with respect to the interface~$\Gamma$,
this is not the case anymore when some elements of the mesh are cut by~$\Gamma$: due to the low global Sobolev regularity of the solution to the fluid-fluid interface problem, the convergence rate is very poor.
Therefore, we resorted to the $\h\p$-version of the method using geometrically graded meshes in both an isotropic and an anisotropic fashion,
recovering algebraic and exponential convergence in terms of the number of degrees of freedom in the former and latter cases, respectively.

Lastly, we highlight that, although the method presented herein has been described for 2D problems only,
it can be generalized to the 3D case, as discussed in~\cite[Section 3.7]{ncHVEM} for the nonconforming harmonic VEM, with a minor effort.

\paragraph*{Acknowledgements}
The authors have been funded by the Austrian Science Fund (FWF) through the project F 65 (L.M.) and the project P 29197-N32 (A.P.).

{\footnotesize
\bibliography{bibliogr}
}
\bibliographystyle{plain}

\end{document}